\numberwithin{equation}{section}
\renewcommand{\Re}{\operatorname{Re}}
\renewcommand{\Im}{\operatorname{Im}}
\newcommand{\ic}{\mathbf{i}}
\newcommand{\tend}[2]{\underset{#1\to #2}{\longrightarrow} }
\newcommand{\ntriple}[1]{{\left\vert\kern-0.25ex\left\vert\kern-0.25ex\left\vert #1 
    \right\vert\kern-0.25ex\right\vert\kern-0.25ex\right\vert}}
\newcommand{\der}[2]{\frac{\dd #1}{\dd #2}}
\newcommand{\dd}{\mathrm{d}}
\newcommand{\Diff}{\mathrm{D}}
\newcommand{\supp}{\mathrm{supp}\hspace{1mm}}
\newcommand{\eps}{\varepsilon}
\newcommand{\R}{\mathbb{R}}
\newcommand{\C}{\mathbb{C}}
\DeclareMathOperator{\vect}{span}
\renewcommand{\tilde}{\widetilde}
\theoremstyle{plain}
	\newtheorem{theorem}{Theorem}[section]
	\newtheorem{proposition}[theorem]{Proposition}    
	\newtheorem{lemma}[theorem]{Lemma}
	\newtheorem{remark}[theorem]{Remark}
    \newtheorem{hypothesis}[theorem]{Hypothesis}
\theoremstyle{definition}
	\newtheorem{definition}[theorem]{Definition}
\title{Long-time Confinement near Special Vortex Crystals}
\author{Martin Donati\footnote{Université Grenoble Alpes, Institut Fourier, 100 rue des Mathématiques, 38610 Gières, France.}}
\date{\today}
\begin{document}

\maketitle

\begin{abstract}
    In this paper, we control the growth of the support of particular solutions to the Euler two-dimensional equations, whose vorticity is concentrated near special vortex crystals. These vortex crystals belong to the classical family of regular polygons with a central vortex, where we choose a particular intensity for the central vortex to have strong stability properties. A special case is the regular pentagon with no central vortex which also satisfies the stability properties required for the long-time confinement to work.
\end{abstract}

\section{Introduction}

In this paper, we are interested in the long-time stability of vortex structures in two-dimensional incompressible and inviscid fluids. It is long observed that from turbulent flows can emerge naturally large stable vortex structures, see for instance \cite{Driscoll_et_al_1995_Relaxation,Schecter_Driscol_et_al_1999_Vortex,Siegelman_Young_Ingersoll_2022_Polar_vortex_crystals,Drivas_Elgindi_2023_Singularity}. These structures can live for a particularly long time within the fluid. If the initial data is well chosen, this time can even be infinite if the viscosity vanishes, see for instance \cite{Smets_VanSchaftingen_2010}, but also very long when the viscosity is only small, see \cite{Dolce_Gallay_2024_Long_way}.

To study these structures, we consider a fluid with a finite number of small regions where its vorticity is very high, and vanishes elsewhere, so that the fluid mostly consists in a few vortices. We look at the asymptotic where the size of those regions goes to 0, but the \emph{intensity} of the vortices, which is the integral of the vorticity over each region, is fixed. At the limit where the vortices are infinitely concentrated, their motion is given by the \emph{point-vortex dynamics}, see for instance \cite{Marchioro_Pulvirenti_1993}. In that case, the whole evolution of the fluid is reduced to this system of ODE.

This dynamical system has been extensively studied, see for instance \cite{Aref_2007} and references therein. Numerous particular solutions are known. We are interested in a particular class of solutions which are called \emph{vortex crystals}. These configurations of vortices move rigidly: distances between points and angles are conserved through time. This is exactly the kind of configuration arising naturally from turbulent flows through the effect of merging smaller vortices. When accidentally "trapped" into a vortex crystal, vortices cannot get close enough to other vortices to keep merging, letting this vortex crystal survive until the effect of viscosity spreads them enough to restart the merging process.

Vortex crystals have also been studied, see \cite{AREF_2003_Vortex_Crystals}, and in particular the question of stability of these configurations, to explain why they could live so long even within the remains of a turbulent background. In \cite{Cabral_Schmidt_1999_Stability_N+1_vortex}, the stability of particular configurations is proved. These configurations consist of $N$ vortices, $N-1$ of which are placed at the vertices of a regular polygon and have the same intensity, and the last one is placed at the center of it with an intensity of his own. When the central vorticity lies within a certain range depending on the number of points and the intensity of the exterior vortices, then \cite{Cabral_Schmidt_1999_Stability_N+1_vortex} proves the linear and nonlinear stability of the configurations.

The desingularization of such configurations, namely proving the existence of solutions to the Euler equations concentrated near these configurations of point-vortices has been long done, see \cite{Turkington_1985_Corotating}. In the present paper, we try to push further the analysis and prove that any solution to the Euler equation concentrated near this type of configuration remains concentrated for a very long time following the vortex crystal prediction. More precisely, if the support of the initial vorticity lies within disks of size $\eps$, we prove that it remains within disks of size $\eps^\beta$, with $\beta<1/2$, for a time at least of order $\eps^{-\alpha}$ for some $\alpha >0$. This extends previous results, see \cite{marchiorobutta2018,Donati_Iftimie_2020}, where such long time confinement was obtained for configurations satisfying either $N=1$ (only one vortex), or that point-vortices had to move away from each other very fast. Here, we prove that we need neither of those assumptions, but instead that the vortices are disposed according to the previously described polygonal configuration.

It is important to mention that unlike the stability result proved in \cite{Cabral_Schmidt_1999_Stability_N+1_vortex}, we do not prove our result for a whole range of intensity of the central vortex, but only for one precise intensity, which gives in some sense the best possible stability. This choice is made so that the strain that each vortex undergoes due to the effect of the others vanishes at first order. These configurations are depicted in Figure~\ref{fig:config}. It requires at least 4 vortices, and it is worth mentioning that in the special case of vortices disposed of as a regular pentagon, one has to choose the intensity of the central vortex to be exactly 0 so that our method works. This means that the pentagon, with no central vortex, is very stable for the confinement problem. For every other number of exterior vortices, we obtain a unique choice of non-vanishing intensity for the central vortex, that is negative for $N \in \{4,5\}$ and positive for $N \ge 7$.

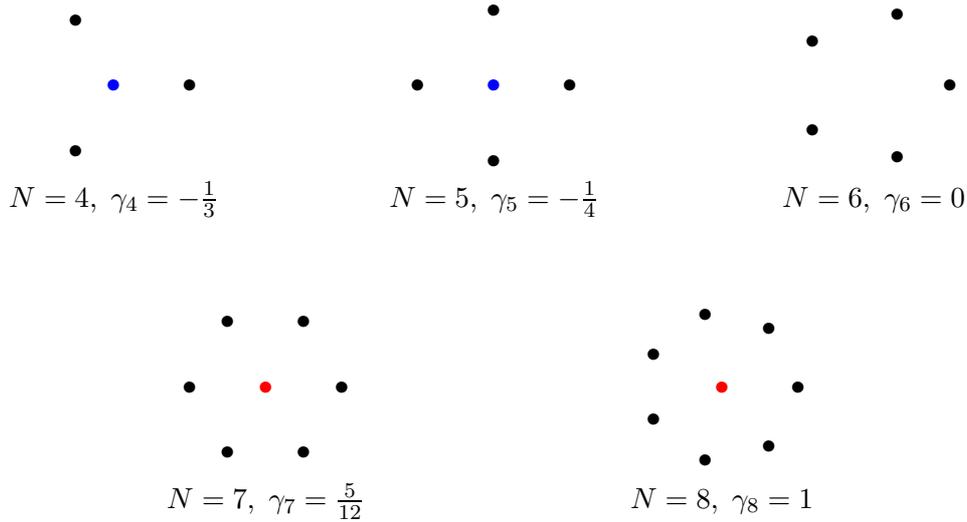
\begin{figure}
    \centering
     \begin{tikzpicture}

        \draw (0,0) node {$\textcolor{blue}{\bullet}$};
        \draw (0,-1.5) node {$N=4, \; \gamma_4 = -\frac{1}{3}$};
        \draw (0+1,0) node {$\bullet$};
        \draw (0-0.5,0.866) node {$\bullet$};
        \draw (0-0.5,-0.866) node {$\bullet$};

        \draw (5,0) node {$\textcolor{blue}{\bullet}$};
        \draw (5,-1.5) node {$N=5, \; \gamma_5 = -\frac{1}{4}$};
        \draw (5+1,0) node {$\bullet$};
        \draw (5-1,0) node {$\bullet$};
        \draw (5,1) node {$\bullet$};
        \draw (5,-1) node {$\bullet$};

        \draw (10+0,-1.5-0) node {$N=6, \; \gamma_6 = 0$};
        \draw (10+1,0-0) node {$\bullet$};
        \draw (10+0.309,0.951-0) node {$\bullet$};
        \draw (10-0.808,0.589-0) node {$\bullet$};
        \draw (10-0.808,-0.589-0) node {$\bullet$};
        \draw (10+0.309,-0.951-0) node {$\bullet$};

        \draw (2+0,0-4) node {$\textcolor{red}{\bullet}$};
        \draw (2,-1.5-4) node {$N=7, \; \gamma_7 = \frac{5}{12}$};
        \draw (2+1,0-4) node {$\bullet$};
        \draw (2+0.5,0.866-4) node {$\bullet$};
        \draw (2-0.5,0.866-4) node {$\bullet$};
        \draw (2-1,0-4) node {$\bullet$};
        \draw (2-0.5,-0.866-4) node {$\bullet$};
        \draw (2+0.5,-0.866-4) node {$\bullet$};

        \draw (8+0,0-4) node {$\textcolor{red}{\bullet}$};
        \draw (8,-1.5-4) node {$N=8, \; \gamma_8 = 1$};
        \draw (8+1,0-4) node {$\bullet$};
        \draw (8+0.62,0.78-4) node {$\bullet$};
        \draw (8-0.22,0.97-4) node {$\bullet$};
        \draw (8-0.9,0.43-4) node {$\bullet$};
        \draw (8-0.9,-0.43-4) node {$\bullet$};
        \draw (8-0.22,-0.97-4) node {$\bullet$};
        \draw (8+0.62,-0.78-4) node {$\bullet$};
    \end{tikzpicture}
    \caption{Various polygonal vortex crystals of radius 1 and exterior intensities 1, where $\gamma_N$ designates the intensity of the central vortex.}
    \label{fig:config}
\end{figure}

\section{Modelling and main result}

We consider the two-dimensional incompressible Euler equations in vorticity form
\begin{equation}\tag{Eu}\label{Eu}
    \begin{cases}
        \partial_t \omega(x,t) + u(x,t) \cdot \nabla \omega(x,t) = 0 & \text{ on } \R^2 \times \R_+ \vspace{2mm} \\
        \displaystyle u(x,t) = \int_{\R^2} K(x-y) \omega(y,t)\dd y = \big(K\star \omega(t)\big)(x) & \text{ on } \R^2 \times \R_+,
    \end{cases}
\end{equation}
where
\begin{equation*}
    \forall x \in \R^2 \setminus \{0\}, \quad K(x) = \frac{x^\perp}{2\pi |x|^{2}}, \qquad x^\perp := (-x_2,x_1).
\end{equation*}

For $\omega_0 \in L^1 \cap L^\infty(\R^2)$, it is now well known since \cite{YUDOVICH19631407} that these equations have a unique global solution $\omega \in L^\infty\big(\R_+, L^1 \cap L^\infty(\R^2)\big)$, see for instance \cite[Chapter 2]{Marchioro_Pulvirenti_1993}. In this paper, we are interested in the long-time behavior of particular families of solutions. More precisely, for a given choice of $N$ different points $z_1,\ldots,z_N \in \R^2$ and intensities $\gamma_1,\ldots,\gamma_N \in \R\setminus\{0\}$, we consider a family of initial data $(\omega_0^\eps)_{\eps >0}$ satisfying the following hypothesis.
\begin{hypothesis}\label{hyp:omega}
    There exist constants $M$ and $\eta \ge 2$ such that for every $\eps > 0$,
    \begin{itemize}
        \item $\omega_0^\eps \in L^1 \cap L^\infty(\R^2)$,
        \item $\displaystyle \omega_0^\eps = \sum_{i=1}^N \omega_{0,i}^\eps \quad $ where $\displaystyle \quad \frac{\omega_{0,i}^\eps}{\gamma_i} \ge 0$ and $\displaystyle\int_{\R^2} \omega_{0,i}^\eps(x)\dd x = \gamma_i$,
        \item $\supp \omega_{0,i} \subset D(z_i,\eps)$,
        \item $ |\omega_0^\eps | \le M \eps^{-\eta}$,
    \end{itemize}
    where we denote by $D(z,r) := \{ x \in \R^2 \, , \, |x-z| < r\}$.
\end{hypothesis}
In particular, such a family $(\omega_0^\eps)_{\eps > 0}$ satisfies that $$\omega_0^\eps \tend{\eps}{0} \sum_{i=1}^N \gamma_i \delta_{z_i}$$ weakly in the sense of measures. It is now well known, see \cite{marchioro1993VorticiesAndLocalization}, that in that case, the solution $\omega^\eps$ of the problem
\begin{equation}\label{eq:du_pb}
    \begin{cases}
        \partial_t \omega^\eps + u^\eps \cdot \nabla \omega^\eps = 0 \vspace{2mm} \\
        u^\eps(t) = K \star \omega^\eps(t), \vspace{2mm} \\
        \omega^\eps(0) = \omega_0^\eps
    \end{cases}
\end{equation}
satisfies on any time interval $[0,T]$ that 
\begin{equation}\label{eq:CV_omega}
    \omega^\eps(t) \tend{\eps}{0} \sum_{i=1}^n \gamma_i \delta_{z_i(t)}
\end{equation}
weakly in the sense of measures, where the $t\mapsto z_i(t)$ are given by the following system:
\begin{equation}\label{PVS}\tag{PVS}
    \der{}{t} z_i(t) = \sum_{j \neq i} \gamma_j K\big(z_i(t)-z_j(t)\big) = \sum_{j \neq i} \frac{\gamma_j}{2\pi} \frac{\big(z_i(t)-z_j(t)\big)^\perp}{\big| z_i(t)-z_j(t)\big|^2}.
\end{equation}
This dynamics is called the \emph{point-vortex system}, as it corresponds to the ideal dynamics of vortices in a fluid whose vorticity is infinitely concentrated in some points.

Solutions of this dynamics are not always global in time as \emph{collapses} of point-vortices can happen. Indeed, if there exist two indices $i \neq j$ and a time $T< \infty$ such that
\begin{equation*}
    \liminf_{t\to T} |z_i(t)- z_j(t)| = 0,
\end{equation*}
then the equations~\eqref{PVS} blow-up at time $T$. However, collapses are known to be exceptional, see \cite{Marchioro_Pulvirenti_1993}, under the standard assumption on the intensities that
\begin{equation*}
    \forall P \subset \{1,\ldots,N\}, \quad \sum_{i \in P} \gamma_i \neq 0.
\end{equation*}
In the following, we will only deal with configurations that do not give rise to such collapses, so that the solution of the point-vortex dynamics will be global in time. For additional information on the point-vortex dynamics, we refer the reader to \cite{Aref_2007}.

Recent works have been more precise on the convergence \eqref{eq:CV_omega}. In particular, since the initial datum is compactly supported, the solution remains compactly supported at all times. Therefore, one can define for any $\beta < 1$ the time
\begin{equation*}
    \tau_{\eps,\beta} = \sup \left\{ t \ge 0 \, , \, \forall s \in [0,t] \, , \, \supp \omega^\eps(s) \subset \bigcup_{i=1}^N D(z_i(s),\eps^\beta) \right\},
\end{equation*}
which describes how long the solution remains supported within disks of size $\eps^\beta$ around the point-vortex solution. In \cite{marchiorobutta2018}, the authors prove that in general, this time of confinement is at least of order $|\ln \eps|$.

\begin{theorem}[\cite{marchiorobutta2018}]
    Let $(\omega_0^\eps)_{\eps>0}$ satisfying Hypothesis~\ref{hyp:omega} for some $z_1,\ldots,z_N$ and $\gamma_1,\ldots,\gamma_N$ giving rise to a global solution of the point-vortex dynamics~\eqref{PVS} such that
    \begin{equation*}
        \inf_{t\ge 0} \min_{i\neq j} |z_i(t)-z_j(t)| >0.
    \end{equation*} Then for any $\beta<1/2$ there exists a constant $C$ such that for any $\eps > 0$ small enough, the solution $\omega^\eps$ of the problem~\eqref{eq:du_pb} satisfies $\tau_{\eps,\beta} \ge C|\ln\eps|$.
\end{theorem}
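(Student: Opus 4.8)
The plan is to run a continuation (bootstrap) argument on the maximal interval $[0,\tau_{\eps,\beta})$. On this interval the support of each $\omega_i^\eps$ lies in $D(z_i(s),\eps^\beta)$ by the very definition of $\tau_{\eps,\beta}$, so for $\eps$ small the $N$ blobs are separated by a distance bounded below by a fixed $d_0>0$ (any constant below $\min_{i\neq j}\inf_s|z_i(s)-z_j(s)|$ works, this infimum being positive because \eqref{PVS} is global and collapse-free), and the velocity induced on blob $i$ by the others is then Lipschitz at scale $d_0$. For each $i$ I would track two scalars: the displacement $\delta_i(t)=|B_i(t)-z_i(t)|$ of the center of vorticity $B_i(t)=\gamma_i^{-1}\int x\,\omega_i^\eps(x,t)\,\dd x$ from the point vortex, and the spread $I_i(t)=\gamma_i^{-1}\int|x-B_i(t)|^2\,\omega_i^\eps(x,t)\,\dd x$. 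Since $\supp\omega_{0,i}^\eps\subset D(z_i,\eps)$, these are of order $\eps$ and $\eps^2$ at $t=0$, and the whole point is that neither can reach the scale $\eps^\beta$, resp.\ $\eps^{2\beta}$, before a time of order $|\ln\eps|$.

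The engine of the estimates is the antisymmetry $K(x-y)=-K(y-x)$ together with the orthogonality $(x-y)\cdot K(x-y)=0$. First, the self-interaction of a blob never moves its own center of vorticity: symmetrising $\gamma_i^{-2}\iint K(x-y)\,\omega_i^\eps(x)\omega_i^\eps(y)\,\dd x\,\dd y$ in $(x,y)$ gives $0$, so $\dot B_i=\gamma_i^{-1}\sum_{j\neq i}\int(K\star\omega_j^\eps)\,\omega_i^\eps\,\dd x$, which equals $\sum_{j\neq i}\gamma_j K(B_i-B_j)$ up to an error quadratic in the spreads (the dipole correction vanishes because one expands around the centers of vorticity). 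Comparing with \eqref{PVS} and maximising over the blobs yields $\frac{\dd}{\dd t}\max_i\delta_i\le C\max_i\delta_i+C\eps^{2\beta}$. Second, differentiating $I_i$ and inserting $\dot B_i$, the self-field contributes exactly $\gamma_i^{-2}\iint(x-y)\cdot K(x-y)\,\omega_i^\eps(x)\omega_i^\eps(y)=0$, so $I_i$ is driven only by the strain $DK(B_i-B_j)$ of the well-separated neighbours, giving $\dot I_i\le C I_i+\text{(higher order)}$. Both are Gronwall inequalities with an $O(1)$ rate $C$ fixed by $d_0$ and the intensities, so $I_i(t)\lesssim\eps^2 e^{Ct}$ and $\delta_i(t)\lesssim(\eps+\eps^{2\beta}t)e^{Ct}$; these stay below their thresholds until $Ct\approx(2-2\beta)|\ln\eps|$, which is precisely the logarithmic time, with the exponential rate being the linear strain neighbouring vortices impose on one another.

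The genuine difficulty — and what I expect to be the main obstacle — is the last step: passing from control of the bulk (center and second moment) to control of the full support, which is what $\tau_{\eps,\beta}$ actually measures. A second-moment bound only forbids a definite amount of mass from travelling far — by Chebyshev the mass of $\omega_i^\eps/\gamma_i$ beyond radius $\eps^\beta$ from $B_i$ is $\lesssim I_i\,\eps^{-2\beta}$ — but it does not by itself rule out a thin filament of vorticity reaching the boundary of $D(z_i,\eps^\beta)$, especially since the self-induced velocity is of order $\eps^{-1}$ near the core. This is exactly where the quantitative bound $|\omega_0^\eps|\le M\eps^{-\eta}$ is used: it forces any outlying vorticity to occupy a correspondingly large area, so that the smallness of the outlying mass can be converted, through an iteration on dyadic annuli (equivalently, a bound on a sufficiently high moment $\int|x-B_i|^{2p}\omega_i^\eps$), into the statement that no part of the support can protrude past $\eps^\beta$ while the moments remain controlled. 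The restriction $\beta<1/2$ is the classical threshold for concentration estimates of this type — it already appears for a single isolated vortex — and it is the regime in which this self-induced spreading and the outlying-mass control can be reconciled with the scale $\eps^\beta$ one is trying to preserve, so that the bootstrap closes and $\tau_{\eps,\beta}$ is pushed beyond $C|\ln\eps|$.
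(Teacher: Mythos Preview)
The paper does not prove this theorem: it is quoted as a result of \cite{marchiorobutta2018}, with no argument given here. So there is no ``paper's own proof'' to compare against.

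That said, your sketch is the correct outline of the standard proof, and it is exactly the skeleton on which the present paper builds its improved confinement result in Section~\ref{sec:confinement}. The same quantities --- centers of vorticity $b_i^\eps$, second moments $I_i^\eps$, and higher moments $m_{n,i}^\eps$ --- appear there (Lemmas~\ref{lem:moments}, \ref{lem:vitesse_radiale}, \ref{lem:4n}), and the same cancellations you invoke (antisymmetry of $K$ killing the self-contribution to $\dot b_i^\eps$, orthogonality $(x-y)\cdot K(x-y)=0$ killing the self-contribution to $\dot I_i^\eps$) drive those lemmas. The only structural difference between the cited logarithmic result and the paper's power-law result is that, under Hypothesis~\ref{hyp:strong_stabilité}, the Lipschitz constant of the exterior field $F_i^\eps$ acquires an extra factor $\eps^\beta$ (Lemma~\ref{lem:lip_amélioré}); without that hypothesis the constant is $O(1)$, the Gronwall rate is $O(1)$, and one lands on the $|\ln\eps|$ timescale you describe. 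Your identification of the passage from moment control to support control as the delicate step, of the role of the $L^\infty$ bound $|\omega_0^\eps|\le M\eps^{-\eta}$, and of the threshold $\beta<1/2$, is accurate.
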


This logarithmic bound is optimal in general, see \cite{Donati_2024_DCDS}. However, in the same article, Buttà and Marchioro observed that for some special configurations of point-vortices, this bound can be improved.

\begin{theorem}[\cite{marchiorobutta2018}] \label{theo:BM_2018_strong}
    Assume that $z_1,\ldots,z_N$ and $\gamma_1,\ldots,\gamma_N$ are chosen such that the solution of the system~\eqref{PVS} is a self-similar expanding configuration. Then for any $\beta<1/2$ there exists a constant $\alpha > 0$ such that for any $\eps > 0$ small enough, the solution $\omega^\eps$ of the problem~\eqref{eq:du_pb} satisfies $\tau_{\eps,\beta} \ge \eps^{-\alpha}$.
\end{theorem}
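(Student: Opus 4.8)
The plan is to quantify the spreading of each vortex blob by a single scalar, its second moment about its own center of vorticity, and to exploit two facts: that a blob does not spread under its \emph{own} velocity field, and that for a self-similar expanding configuration the mutual distances grow like $\sqrt{t}$, so that the strain exerted by the other vortices decays in time. This time-decay is exactly what upgrades the exponential-in-time growth responsible for the $|\ln\eps|$ bound into a merely polynomial growth, yielding a confinement time of order $\eps^{-\alpha}$. Morally this is the statement that, in the rescaled self-similar frame where the configuration is frozen, the strain is constant and confinement holds for a logarithmic time, which translates back to a power of $\eps$ in the original time.

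Concretely I would write $\omega^\eps = \sum_i \omega_i^\eps$ with each $\omega_i^\eps$ transported by the full field $u^\eps$ (so that $\omega_i^\eps/\gamma_i \ge 0$), set $B_i(t) = \gamma_i^{-1}\int x\,\omega_i^\eps\,dx$ for the center of vorticity of the $i$-th blob, and introduce
\begin{equation*}
  J_i(t) = \int |x - B_i(t)|^2\,\frac{\omega_i^\eps(x,t)}{\gamma_i}\,dx \ge 0 .
\end{equation*}
Since $\int (x - B_i)\,\omega_i^\eps\,dx = 0$, the transport equation gives $\frac{\dd}{\dd t} J_i = \frac{2}{\gamma_i}\int (x-B_i)\cdot u^\eps\,\omega_i^\eps\,dx$, with no contribution from $\dot B_i$. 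Splitting $u^\eps = u_i + u_{\neq i}$ into the self-field of blob $i$ and the field of the others, the self-contribution vanishes identically: symmetrising the resulting double integral in $x \leftrightarrow y$ turns the integrand into a multiple of $(x-y)^\perp\cdot(x-y) = 0$, which is the analytic form of the conservation of the moment of inertia of an isolated vortex. Hence only the external field can spread the blob.

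It then remains to estimate $\frac{2}{\gamma_i}\int (x-B_i)\cdot u_{\neq i}\,\omega_i^\eps\,dx$. As long as the blobs stay confined, they remain well separated and $u_{\neq i}$ is smooth on $\supp \omega_i^\eps$; Taylor expanding about $B_i$, the constant term integrates to zero (again by the definition of $B_i$) and the antisymmetric part of $\nabla u_{\neq i}(B_i)$ contributes nothing, leaving a strain term bounded by $\|\nabla u_{\neq i}(B_i)\|\,J_i$ plus a cubic remainder. One has $\|\nabla u_{\neq i}(B_i)\| \lesssim \sum_{j\ne i}|\gamma_j|/|z_i-z_j|^2 \lesssim d(t)^{-2}$, where $d(t) = \min_{j\ne i}|z_i(t)-z_j(t)|$, and the self-similar expansion gives $d(t) \gtrsim \sqrt{1+t}$. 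The cubic remainder is controlled by $\eps^\beta\|\nabla^2 u_{\neq i}\|\,J_i \lesssim \eps^\beta d(t)^{-3}J_i$, negligible against the strain term since $\eps^\beta/d(t)\to 0$. This produces the differential inequality $\frac{\dd}{\dd t}J_i \le \frac{C}{1+t}\,J_i$ on the confinement interval, whence $J_i(t) \lesssim J_i(0)(1+t)^C \lesssim \eps^2(1+t)^C$.

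The argument would be closed by a continuation (bootstrap) on the exit time $\tau_{\eps,\beta}$: on $[0,\tau_{\eps,\beta}]$ the confinement hypothesis legitimises the separation and smoothness used above, the bound $J_i(t)\lesssim \eps^2(1+t)^C$ holds, and one checks that the support cannot reach radius $\eps^\beta$ before $(1+t)^C$ reaches $\eps^{2\beta-2}$, i.e. before a time of order $\eps^{-\alpha}$ with $\alpha = 2(1-\beta)/C > 0$. I expect the main obstacle to be precisely this last passage, from the moment bound to genuine $L^\infty$ control of the support that the definition of $\tau_{\eps,\beta}$ demands: $J_i$ controls only the bulk of the vorticity, and a small amount of mass could a priori escape far while keeping $J_i$ small. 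Handling it requires supplementing the moment estimate with a direct control of the outer mass $\int_{|x-B_i|>\ell}\omega_i^\eps/\gamma_i$ and of the Lagrangian trajectories on the edge of the support, using that the velocity generated by the little escaping mass is itself small; this is where the bulk of the technical work lies, and where the restriction $\beta<1/2$ enters.
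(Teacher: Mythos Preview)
The paper does not prove this theorem: it is quoted from \cite{marchiorobutta2018} and only the statement is given. So there is no ``paper's own proof'' to compare against directly. That said, the paper's proof of its main confinement result, Theorem~\ref{theo:confinement}, uses exactly the machinery you outline (centers of vorticity $b_i^\eps$, second moments $I_i^\eps$, splitting off the exterior field $F_i^\eps$, the vanishing of the self-interaction, higher moments and outer-mass control as in Lemmas~\ref{lem:vitesse_radiale} and~\ref{lem:4n}), with one structural difference: in the vortex-crystal setting the smallness of the strain does not come from $d(t)\to\infty$ but from Hypothesis~\ref{hyp:strong_stabilité}, which forces the first-order strain $\sum_{j\ne i}\gamma_j J_K(z_i^*-z_j^*)$ to vanish identically. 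Your mechanism (strain $\lesssim d(t)^{-2}\lesssim (1+t)^{-1}$ because of self-similar expansion, hence $\dot J_i \le C(1+t)^{-1}J_i$ and $J_i\lesssim \eps^2(1+t)^C$) is precisely the one used in \cite{marchiorobutta2018}, and your identification of the moment-to-support passage as the main technical hurdle is accurate; the paper itself defers that step to \cite{Donati_Iftimie_2020} and \cite{Iftimie99Sideris}.

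One point you gloss over and should make explicit: the exit time $\tau_{\eps,\beta}$ is defined with respect to the point-vortex trajectories $z_i(t)$, not the centers of vorticity $B_i(t)$. Your bootstrap therefore also needs $|B_i(t)-z_i(t)|=o(\eps^\beta)$ on the confinement interval. This is obtained by showing that $B^\eps$ satisfies the point-vortex ODE up to an error of size $\sqrt{I^\eps}\lesssim \eps(1+t)^{C/2}$ and then integrating; in the expanding case the Lipschitz constant of the exterior field again decays like $(1+t)^{-1}$, which keeps this Gr\"onwall under control. In the paper's vortex-crystal setting this step is nontrivial and is handled separately via the stability analysis of Section~\ref{sec:stab} (Proposition~\ref{prop:stabilité}); in the expanding case it is easier, but it is still a necessary ingredient that your sketch omits.
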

The fact that the configuration is self-similar is not necessary for the proof of Theorem~\ref{theo:BM_2018_strong} but instead, what is required is that for every $i \neq j$, and every $t \ge 0$,
\begin{equation}\label{eq:config_infty}
    |z_i(t) - z_j(t)| \ge C_{i,j} \sqrt{t+1} \tend{t}{+\infty} + \infty. 
\end{equation}
There are other configurations found in \cite{marchiorobutta2018} that satisfy this improved bound on the time of confinement when one only considers a single blob of vorticity $(N=1)$, in the plane, or at the center of a disk. In \cite{Donati_Iftimie_2020}, it is proved that, for $N=1$ in some special bounded and simply connected domains with a suitable choice of $z_1$, the solution satisfies that $\tau_{\eps,\beta} \ge \eps^{-\alpha}$ for any $\alpha < \min(\beta,2-4\beta)$.

\medskip

In this paper, we prove that the same improved bound on the confinement time can be obtained near configurations of point-vortices that do not need to satisfy relation~\eqref{eq:config_infty} nor $N=1$, but instead satisfy some strong stability properties. We consider the following configurations.

Let $\gamma_1 = \ldots = \gamma_{N-1} = 1$ and $\gamma_N \in \R$ to be chosen later. We identify $\C = \R^2$ by considering that $ a + \ic b = \begin{pmatrix} a \\ b \end{pmatrix}$. Let $\zeta = e^{\ic \frac{2\pi}{N-1}}$. We set
\begin{equation}\label{eq:config}
    \begin{cases}
        z_k^* = \zeta^k, \quad \forall k \in \{1,\ldots, N-1\},\\
        z_N^* = 0.
    \end{cases}
\end{equation}
This configuration is a relative equilibrium of the point-vortex dynamics in the following sense.
\begin{definition}\label{def:rel_eq}
    We say that a point $(z_1^*,\ldots,z_N^*) \in (\R^2)^N$ is a relative equilibrium of the point-vortex dynamics if the solution $t \mapsto (z_1(t),\ldots,z_N(t))$ of~\eqref{PVS} starting from $(z_1^*,\ldots,z_N^*)$ satisfies for every $i \in \{1,\ldots,N\}$ and every $t \in \R$ that $z_i(t) = e^{\ic \nu t} z_i^* $ for some $\nu$ that depends neither on $i$ nor on $t$.
\end{definition}
The solution arising from a relative equilibrium is called a \emph{vortex crystal}.
The stability of the configuration given in~\eqref{eq:config}, depending on $\gamma_N$, has been studied in particular in \cite{Cabral_Schmidt_1999_Stability_N+1_vortex} where the following result\footnote{In \cite{Cabral_Schmidt_1999_Stability_N+1_vortex}, the number of vortices is denoted $N+1$, hence the difference in the formulation of \cite{Cabral_Schmidt_1999_Stability_N+1_vortex} with the present statement of their result.} was shown.
\begin{theorem}[{\cite[Theorem 5.1]{Cabral_Schmidt_1999_Stability_N+1_vortex}}]\label{theo:CS}
    For any $N \ge 4$, if
    \begin{equation}\label{eq:range_gammaN}
        \begin{split}
            (N^2 - 10N + 17)/16 < \gamma_N < (N-2)^2/4 & \quad \text{ when $N$ is odd} \\
            (N^2 - 10N + 16)/16 < \gamma_N < (N-2)^2/4 & \quad \text{ when $N$ is even},
        \end{split}
    \end{equation}
    then the configuration~\eqref{eq:config} is linearly and nonlinearly stable.
\end{theorem}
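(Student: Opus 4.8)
The plan is to treat \eqref{PVS} as a Hamiltonian system and combine the energy–momentum (Lagrange–Dirichlet) method with the symmetry reduction afforded by the polygonal structure. Writing $z_i = x_i + \ic y_i$, the system \eqref{PVS} is Hamiltonian for the weighted symplectic form $\sum_i \gamma_i\, \dd x_i \wedge \dd y_i$ with Hamiltonian $H = -\frac{1}{4\pi}\sum_{i\neq j}\gamma_i\gamma_j \ln|z_i-z_j|$, and rotational and translational invariance furnish the conserved angular impulse $I = \sum_i \gamma_i |z_i|^2$ and linear impulse $\sum_i \gamma_i z_i$. First I would pass to a frame rotating at the crystal's angular velocity $\nu$, which I compute directly: using $K(z)=\frac{\ic}{2\pi\overline z}$ and the root-of-unity identity $\sum_{k=1}^{n-1}\frac{1}{1-\zeta^k}=\frac{n-1}{2}$ with $n=N-1$, the velocity of the outer vortex at $1$ gives $\nu = \frac{N-2}{4\pi}+\frac{\gamma_N}{2\pi}$. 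In this frame the vortex crystal of Definition~\ref{def:rel_eq} becomes a genuine equilibrium, and it is a critical point of the augmented Hamiltonian $H_\nu = H - \nu I$ (up to the normalization of $I$, the multiplier being fixed by the vanishing of the first variation), which is the natural candidate Lyapunov function.

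The second step is to linearize the rotating-frame equations about this equilibrium and exploit its symmetry. The configuration \eqref{eq:config} is invariant under the dihedral group $D_{N-1}$ permuting the outer vortices cyclically and fixing the center, so the linearized vector field commutes with the induced representation of $\Z/(N-1)\Z$ on perturbation space. I would therefore apply the discrete Fourier transform over the outer vortices to block-diagonalize the full linearization into small blocks $A_m$ indexed by Fourier modes $m \in \{0,1,\ldots,N-2\}$. The central vortex couples only to the mode $m=0$ (uniform radial breathing) and to the modes $m=\pm 1$ mod $N-1$ (rigid translation of the polygon relative to the center), so each $A_m$ is a constant-size matrix with entries that are explicit functions of $N$, $\gamma_N$ and $\nu$. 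This reduces the spectral problem from an $N$-dependent matrix to finitely many closed-form scalar computations.

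The heart of the matter — and the step I expect to be the \emph{main obstacle} — is the closed-form evaluation of the circulant interaction sums entering each block, namely quantities of the form $\sum_{k=1}^{N-2}\frac{1}{|1-\zeta^k|^2}$ and $\sum_{k=1}^{N-2}\frac{\zeta^{mk}}{|1-\zeta^k|^2}$, which reduce to cosecant/cotangent sums evaluating to polynomials in $N$. Once these are in hand the eigenvalues of each $A_m$ are explicit, and since the system is Hamiltonian they occur in quadruples $\pm\lambda,\pm\bar\lambda$; linear stability amounts to every $A_m$ having purely imaginary spectrum. Writing out the resulting discriminant inequalities mode by mode, the binding constraints come from two distinct modes, and crucially the extremal mode $m=(N-1)/2$ exists only when $N-1$ is even, i.e. $N$ odd, which is precisely the source of the parity split in \eqref{eq:range_gammaN}; intersecting all the per-mode inequalities then yields the interval $((N^2-10N+17)/16,(N-2)^2/4)$ for $N$ odd and its even counterpart.

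Finally, to upgrade linear stability to nonlinear stability I would run a Lyapunov argument through the same Fourier block-diagonalization, which simultaneously diagonalizes the Hessian $D^2 H_\nu$ restricted to the complement of the neutral directions generated by the symmetry (rotation and translation). I would show that on the range \eqref{eq:range_gammaN} this reduced Hessian is sign-definite: block by block, definiteness of the quadratic form of $A_m$ is governed by the same discriminant inequalities already obtained, so the linear-stability range coincides with the definiteness range. Definiteness of $H_\nu$ transverse to the group orbit makes $H_\nu$ a genuine Lyapunov function modulo the $SO(2)$ symmetry, and the Lagrange–Dirichlet / energy–momentum criterion then yields nonlinear (orbital) stability. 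The subtle point to watch is that a Hamiltonian system can be linearly stable with an \emph{indefinite} energy (gyroscopic stabilization); verifying that within \eqref{eq:range_gammaN} one genuinely has definiteness on each block, and not merely imaginary spectrum, is exactly what secures the nonlinear conclusion.
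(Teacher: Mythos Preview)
The paper does not prove Theorem~\ref{theo:CS}; it is quoted from \cite{Cabral_Schmidt_1999_Stability_N+1_vortex} and used as a black box in Section~\ref{sec:construction} (the only thing checked there is that the specific value~\eqref{def:gammaN} falls in the range~\eqref{eq:range_gammaN}). So there is no ``paper's own proof'' to compare against.

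That said, your outline is a faithful sketch of the Cabral--Schmidt argument: Hamiltonian structure with the weighted symplectic form, passage to the rotating frame via the augmented functional $H_\nu = H - \nu I$, block-diagonalization by the discrete Fourier transform over the $\Z/(N-1)\Z$ symmetry (with the central vortex coupling only to the $m=0$ and $m=\pm 1$ blocks), closed-form evaluation of the cosecant/cotangent circulant sums, and the energy--momentum upgrade to orbital nonlinear stability. Your identification of the self-conjugate mode $m=(N-1)/2$, present precisely when $N$ is odd, as the origin of the parity split in~\eqref{eq:range_gammaN} is correct.

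One logical imprecision worth flagging: in a Hamiltonian system, purely imaginary spectrum is necessary but not sufficient for linear stability because of possible Jordan blocks, so your sentence ``linear stability amounts to every $A_m$ having purely imaginary spectrum'' is too strong as stated. In Cabral--Schmidt (and in your own final step) this is resolved by proving definiteness of the reduced Hessian of $H_\nu$ on each block, which simultaneously yields semisimplicity, linear stability, and the Lagrange--Dirichlet nonlinear conclusion; so the overall argument stands, but the clean route is to go straight for definiteness rather than treating the spectral condition as an independent step.
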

The precise meaning of stability in this result will be given in Section~\ref{sec:results}.
The main result of this paper is the following.
\begin{theorem}\label{theo:main}
    For every $N \ge 4$, there exists $\gamma_N \in \R$ satisfying relations~\eqref{eq:range_gammaN} such that for any $(\omega_0^\eps)_{\eps>0}$ satisfying Hypothesis~\ref{hyp:omega} for the initial configuration~\eqref{eq:config}, for any $\beta<1/2$, for every $\alpha<\min(\beta/2,2-4\beta)$ and for any $\eps > 0$ small enough, the solution $\omega^\eps$ of the problem~\eqref{eq:du_pb} satisfies $\tau_{\eps,\beta} \ge \eps^{-\alpha}$.
\end{theorem}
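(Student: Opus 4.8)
The plan is to run a continuation (bootstrap) argument on two coupled families of quantities: the positions of the vorticity centers and the concentration of each blob around its center. I would first pass to the frame rotating at the angular velocity $\nu$ of the relative equilibrium, in which the configuration \eqref{eq:config} is a genuine steady state: the transported vorticity then solves an Euler-type equation with an added rotation term, and each point $z_i^*$ becomes a fixed critical point of the associated co-rotating stream function. Writing $\omega^\eps = \sum_{i=1}^N \omega_i^\eps$ for the blob decomposition inherited from Hypothesis~\ref{hyp:omega} (each piece transported, with total mass $\gamma_i$), I would track the centers
\[
B_i(t) = \frac{1}{\gamma_i}\int_{\R^2} x\,\omega_i^\eps(x,t)\,\dd x,
\]
a concentration functional such as the second moment $I_i(t) = \int |x - B_i(t)|^2\,\omega_i^\eps(x,t)\,\dd x$, and the escaping mass $m_i(t) = \int_{|x-B_i(t)|>\eps^\beta}\omega_i^\eps$. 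The objective is to show that, so long as the blobs stay concentrated, $I_i$ and $m_i$ grow slowly enough that the support remains inside $D(B_i,\eps^\beta)$ up to time $\eps^{-\alpha}$.

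Two structural cancellations drive the analysis. First, because the Biot–Savart kernel is antisymmetric and satisfies $(x-y)\cdot K(x-y)=0$, the self-interaction of a blob neither displaces its own center nor changes its own moment of inertia; hence $\dot B_i$ and $\dot I_i$ are governed solely by the velocity $u_{\mathrm{ext},i}$ produced by the other vortices (together with the rotation term), which is smooth near $B_i$ as long as the centers stay separated. Expanding $u_{\mathrm{ext},i}$ around $B_i$, the constant term matches $\dot B_i$ to leading order, the antisymmetric part of $Du_{\mathrm{ext},i}(B_i)$ is a pure rotation that leaves $I_i$ invariant, and the only linear contribution to $\dot I_i$ comes from the symmetric traceless part, namely the strain tensor $S_i$. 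The entire purpose of selecting the special intensity $\gamma_N$ inside the range \eqref{eq:range_gammaN} is that $S_i$ vanishes at the equilibrium configuration, so that $\dot I_i$ loses its leading, exponentially amplifying, term; what remains is a cubic-moment contribution bounded by $\eps^\beta I_i$ on the concentrated part.

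Second, I would invoke the nonlinear stability of Theorem~\ref{theo:CS} to control the centers. The collection $(B_1,\ldots,B_N)$ satisfies \eqref{PVS} up to a forcing of size $O(\sup_j I_j)$ arising from the correction between $\dot B_i$ and $u_{\mathrm{ext},i}(B_i)$, which is quadratic in the blob radius since $\int (x-B_i)\,\omega_i = 0$. Because \eqref{eq:config} is linearly and nonlinearly stable, a perturbed trajectory starting $O(\eps)$-close to the rotating equilibrium and subject to this small forcing stays within a controlled neighborhood of the equilibrium orbit; in particular the mutual distances $|B_i - B_j|$ remain bounded below, which rules out collisions and keeps $u_{\mathrm{ext},i}$ and its derivatives uniformly bounded. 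Crucially, since $S_i$ vanishes at the equilibrium and depends smoothly on the configuration, along the perturbed trajectory one gets $|S_i(t)| \lesssim \delta(t)$, where $\delta(t)$ measures the distance of $(B_1,\ldots,B_N)$ to the rotating equilibrium orbit after quotienting out the rotational symmetry.

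This closes a coupled system of differential inequalities: schematically $\dot I_i \lesssim \delta\, I_i + \eps^\beta I_i + (\text{escape contributions})$, while $\delta$ is itself driven by the accumulated spread $\int_0^t \sup_j I_j$. Feeding the threshold $I_j \sim \eps^{2\beta}$ into this loop and optimizing yields the confinement time $\eps^{-\alpha}$ for $\alpha<\beta/2$, whereas the complementary constraint $\alpha<2-4\beta$ comes from separately controlling the escaping mass $m_i$: the small fraction of vorticity leaving $D(B_i,\eps^\beta)$ self-advects through the singular kernel at a rate limited by the bound $|\omega^\eps|\le M\eps^{-\eta}$ and the support size, and balancing these exponents reproduces $2-4\beta$ as in \cite{marchiorobutta2018,Donati_Iftimie_2020}. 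I expect the main obstacle to be exactly this feedback between spreading and center drift for a configuration that is only \emph{neutrally} (Lyapunov) stable rather than asymptotically stable: the rotational symmetry leaves a neutral mode, so $\delta$ cannot be shown to decay and must instead be shown to grow slowly enough, which is what degrades the exponent from the single-vortex value $\beta$ of \cite{Donati_Iftimie_2020} down to $\beta/2$. Making the bound $|S_i|\lesssim\delta$ and the accumulation of $\delta$ quantitative and uniform up to time $\eps^{-\alpha}$ is the technical heart of the proof.
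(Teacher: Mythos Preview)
Your architecture matches the paper's: decompose into blobs, track centers $B_i$ and second moments $I_i$, choose $\gamma_N$ so that the strain vanishes at $Z^*$ and hence $\dot I_i\lesssim\eps^\beta I_i$ on the bootstrap interval, and use stability of the equilibrium to pin the centers near $Z^*$, while the support--growth and large--moment estimates (Lemmas~\ref{lem:vitesse_radiale} and~\ref{lem:4n} here) supply the $2-4\beta$ constraint. One piece you have not addressed is the actual \emph{existence} of such a $\gamma_N$: the paper devotes Section~\ref{sec:construction} to this, reducing by rotational symmetry to a single $2\times2$ matrix identity and obtaining the closed form $\gamma_N=(N-2)(N-6)/12$, which then lands inside the stability range~\eqref{eq:range_gammaN} and explains the pentagon case $\gamma_6=0$.

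The more substantive imprecision is your account of where $\alpha<\beta/2$ comes from. You write that ``$\delta$ is itself driven by the accumulated spread $\int_0^t\sup_j I_j$''. But $I_j$ stays $O(\eps^2)$ for the whole bootstrap interval, and the forcing this puts on the center dynamics is only $O(\eps)$ (or $O(\eps^2)$ with your sharper cancellation); since $\beta<1/2$ this is \emph{subdominant}. In the rotating frame the centers obey $\dot\delta=S\delta+O(|\delta|^2)+O(\eps)$, and on $\{|\delta|\le\eps^\beta\}$ it is the \emph{quadratic nonlinearity of the point-vortex ODE itself}, $O(|\delta|^2)=O(\eps^{2\beta})$, that dominates. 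Furthermore $S$ is not merely neutral: on $V=\vect\{Z^*,JZ^*\}$ one has $S|_V=\begin{pmatrix}0&0\\2\nu&0\end{pmatrix}$, a genuine nilpotent block (the rotational phase is secularly fed by the radial mode), which forces a second time integration. This is exactly Proposition~\ref{prop:stabilité}, giving $|\delta(t)|\lesssim\eps^{2\beta-2\alpha}$ and hence $\alpha<\beta/2$. Your coupled inequalities, as written, would close at a strictly \emph{better} exponent (roughly $\alpha<1-\beta/2$) and therefore do not explain the actual limitation; the $I$--$\delta$ feedback you describe is real but is not the bottleneck.
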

This statement must be associated to the following remark.
\begin{remark}\label{rem:main}
    Our method provides for every $N \ge 4$ a unique value of $\gamma_N$ such that the bound of the confinement holds. Moreover, this value is 0 if and only if $N=6$: in that case, Theorem~\ref{theo:main} should be understood as a confinement result around the configuration consisting of a regular pentagon of vortices of equal intensity (with no central vortex).
\end{remark}
We recall that the configurations for the first values of $N$, with the associated $\gamma_N$, are depicted in Figure~\ref{fig:config}. The explicit formula for $\gamma_N$ is given at relation~\eqref{def:gammaN}.

\medskip

Theorem~\ref{theo:main} is the result of two separate results which are stated in detail in Section~\ref{sec:results}. The first one is a long-time confinement result around a general vortex crystal satisfying some precise hypotheses, and the second is the statement that the configurations we are considering satisfy those hypotheses. As stated in Remark~\ref{rem:main}, our methods provide a unique $\gamma_N$ such that the long-time confinement result holds, but we do not claim that the confinement result is false for other values of $\gamma_N$ within the range given by Theorem~\ref{theo:CS}. However, outside of that range, if the configuration is unstable then the instability prevents the long time confinement from holding, see \cite{Donati_2024_DCDS}.

\medskip

The plan of the paper is the following. In Section~\ref{sec:results} we introduce the appropriate definitions and hypotheses then state and discuss in detail the core results of the paper. In Section~\ref{sec:stab}, we establish an important lemma on the consequence of the stability hypothesis on the dynamics of the center of vorticity of each blob. In Section~\ref{sec:confinement} we prove the confinement result, and in Section~\ref{sec:construction}, we compute a value of $\gamma_N$ such that the configurations given at~\eqref{def:rel_eq} satisfy all the required hypotheses.

\section{Detailed hypotheses and results}\label{sec:results}

\subsection{Hypotheses on the configuration}

We start by stating the general assumptions required on the initial configuration of point vortices around which the confinement result will be obtained. The notations and definitions are inspired from~\cite{Roberts_2013_Stability_of_Relative_Eq} to which we refer the reader interested in a deeper description of the stability of relative equilibria of the point-vortex dynamics.
\medskip

Let $N \ge 1$ and $\gamma_1,\ldots,\gamma_N \in \R\setminus\{0\}$. For some definitions, all the intensities must be non-vanishing, so one simply needs to keep in mind that taking a vanishing intensity for one vortex means removing it so that all the intensities remain non-vanishing.

We now write the point-vortex dynamics as a single equation in $\R^{2N}$. For every $i \in \{1,\ldots,N\}$ we write that $z_i = (x_i,y_i) \in \R^2$ and denote by $Z$ the $2N$-dimensional vector $(x_1,y_1,\ldots,x_N,y_N)$, that we also denote by abuse of notation $Z = (z_1,\ldots,z_N)$. We equip $\R^{2N}$ with the following norm
\begin{equation*}
    |Z| := \max_{1\le i \le N} |z_i|.
\end{equation*}
We denote by $J_{K} : \R^2 \to \mathcal{M}_2(\R)$ the Jacobian matrix of $K$, the vector field defined below equations~\eqref{Eu}.
We can now formulate the first hypothesis.
\begin{hypothesis}\label{hyp:strong_stabilité}
    We assume that $t \mapsto Z^*(t)$ is a solution of the point-vortex equations~\eqref{PVS} such that
    \begin{equation*}
        \forall t \ge 0, \; \forall i \in\{1,\ldots, N\}, \quad \sum_{j\neq i} \gamma_j J_{K}(z_i^*(t)-z_j^*(t)) = 0. 
    \end{equation*}
\end{hypothesis}
We introduce some other notations. Recall that the Hamiltonian
\begin{equation*}
    H(Z) := \frac{1}{2}\sum_{j\neq i} \gamma_i\gamma_j \ln |z_i-z_j| 
\end{equation*}
is a constant of motion. With the $2N\times 2N$ symplectic matrix $J$ given by
\begin{equation*}
    J := \begin{pmatrix} 0 & -1 & & & \\ 1 & 0 & & (0) & \\ & & \ddots & & \\ & (0) & & 0 & -1 \\ & & & 1 & 0\end{pmatrix},
\end{equation*}
with $\Gamma := \mathrm{diag} (\gamma_1,\gamma_1,\ldots,\gamma_N,\gamma_N)$ and with the notation $\nabla = (\partial_{x_1},\partial_{y_1},\ldots,\partial_{x_N},\partial_{y_N})$, the point-vortex equations~\eqref{PVS} can be written as
\begin{equation*}
    \Gamma \der{}{t} Z(t) = J\nabla H (Z(t)).
\end{equation*}
In a rotating frame of constant velocity $\nu$, (namely by switching to $\tilde{Z} = e^{\nu tJ}Z$) this equation becomes
\begin{equation}\label{eq:PVS_rot}
    \Gamma \der{}{t} Z(t) = J(\nabla H (Z(t)) + \nu \Gamma Z(t)).
\end{equation}
Therefore, a configuration $Z^*$ is a relative equilibrium in the sense of Definition~\ref{def:rel_eq} if and only if it is a rest point of this last equation, hence satisfies that
\begin{equation*}
    \nabla H (Z^*) + \nu \Gamma Z^* = 0.
\end{equation*}
The angular velocity $\nu$ is then necessarily given by
\begin{equation}\label{def:nu}
    \nu = - \frac{\sum_{i\neq j} \gamma_i \gamma_j}{2\sum_{i=1}^n \gamma_i |z_i^*|^2}.
\end{equation}
Equation~\eqref{eq:PVS_rot} written near an equilibrium $Z^*$ reads
\begin{equation*}
    \der{}{t} \big(Z(t)-Z^*) = \Gamma^{-1} J \Diff^2 H(Z^*) (Z(t)-Z^*)+ \nu J (Z(t)-Z^*) + \mathcal{O}(|Z(t)-Z^*|^2).
\end{equation*}
We introduce the stability matrix
\begin{equation*}
    S = \Gamma^{-1} J \Diff^2 H(Z^*) + \nu J.
\end{equation*}
For some relative equilibrium $Z^*$ let
\begin{equation*}
    V = \vect \{ Z^*, JZ^* \}
\end{equation*}
and let
\begin{equation*}
    V^\perp := \{ Y \in \R^{2N} \, , \, { }^t Y \Gamma Z = 0 \, , \, \forall Z \in V \}.
\end{equation*}
\begin{proposition}[{\cite[Lemma 2.5]{Roberts_2013_Stability_of_Relative_Eq}}]
    Provided that 
    \begin{equation*}
        \sum_{i \neq j } \gamma_i \gamma_j \neq 0,
    \end{equation*}
    then $V \oplus V^\perp = \R^{2N}$, and both $V$ and $V^\perp$ are invariant spaces of $S$.
\end{proposition}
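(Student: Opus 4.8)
The plan is to reduce the proposition to two algebraic identities for the Hessian $A := \Diff^2 H(Z^*)$ coming from the scaling and rotation symmetries of $H$, and then to read $V^\perp$ as the orthogonal of $V$ for the symmetric bilinear form $b(X,Y) := {}^t X\Gamma Y$. First I would record the structural facts: because $\Gamma$ acts as the scalar $\gamma_i$ on the $i$-th $\R^2$-block while $J$ acts inside each block, $\Gamma$ and $J$ commute, $\Gamma J = J\Gamma$; moreover $A$ is symmetric, and setting $B := A + \nu\Gamma$ (also symmetric) the stability matrix reads $S = \Gamma^{-1} J B$.

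Next I would establish the two key identities. From $H(\lambda Z) = H(Z) + \tfrac12\big(\sum_{i\neq j}\gamma_i\gamma_j\big)\ln\lambda$, differentiating in $\lambda$ at $\lambda = 1$ gives the Euler relation ${}^t Z\,\nabla H(Z) = \tfrac12\sum_{i\neq j}\gamma_i\gamma_j$, and differentiating this in $Z$ yields $\Diff^2 H(Z)\,Z = -\nabla H(Z)$. From rotational invariance $H(e^{\theta J}Z) = H(Z)$ one gets ${}^t(JZ)\,\nabla H(Z) = 0$, and differentiating in $Z$ yields $\Diff^2 H(Z)\,JZ = J\,\nabla H(Z)$. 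Evaluating both at $Z^*$ and using the equilibrium relation $\nabla H(Z^*) = -\nu\Gamma Z^*$ together with $\Gamma J = J\Gamma$, I obtain $A Z^* = \nu\Gamma Z^*$ and $A JZ^* = -\nu\Gamma JZ^*$, hence the two facts I will actually use: $B Z^* = 2\nu\Gamma Z^*$ and $B JZ^* = 0$.

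The invariance of $V$ is then immediate, since $S Z^* = \Gamma^{-1} J B Z^* = 2\nu JZ^*$ and $S(JZ^*) = \Gamma^{-1} J B JZ^* = 0$ both lie in $V$. For $V^\perp$ I would use that ${}^t S\,\Gamma = -B J$: given $Y\in V^\perp$, one computes ${}^t(SY)\Gamma Z^* = -{}^t Y\,B JZ^* = 0$ and ${}^t(SY)\Gamma (JZ^*) = {}^t Y\,B Z^* = 2\nu\,{}^t Y\Gamma Z^* = 0$, the last equality holding because $Y\in V^\perp$; thus $SY\in V^\perp$. Finally, for the splitting I compute the Gram matrix of $\{Z^*, JZ^*\}$ for $b$: using that $\Gamma J$ is skew-symmetric and $J^2 = -I$, it equals $\mu\,I_2$ with $\mu = \sum_i\gamma_i|z_i^*|^2$. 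The scaling identity at $Z^*$ gives $-\nu\mu = \tfrac12\sum_{i\neq j}\gamma_i\gamma_j$, so the hypothesis $\sum_{i\neq j}\gamma_i\gamma_j\neq 0$ forces $\mu\neq 0$; hence $b$ is nondegenerate on $V$, which is exactly the standard criterion guaranteeing $\R^{2N} = V\oplus V^\perp$.

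The main obstacle is the substantive step, namely extracting the two Hessian identities from the symmetries of $H$ and recognizing that together with $\Gamma J = J\Gamma$ they say precisely that $B$ kills $JZ^*$ and sends $Z^*$ into the $\Gamma Z^*$ direction. Once these are in hand, the invariance of $V$ is trivial and that of $V^\perp$ is pure transpose bookkeeping through the relation ${}^t S\Gamma = -BJ$; the direct-sum claim reduces to the nondegeneracy of a diagonal $2\times 2$ Gram matrix, where the only point to watch is that the hypothesis $\sum_{i\neq j}\gamma_i\gamma_j\neq 0$ is exactly what prevents $\mu$ from vanishing.
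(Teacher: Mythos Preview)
The paper does not provide its own proof of this proposition; it is merely quoted from \cite{Roberts_2013_Stability_of_Relative_Eq} and used as a black box. So there is nothing in the present paper to compare your argument against.

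That said, your plan is correct and is in fact essentially the proof one finds in Roberts' paper. The two identities $A Z^* = -\nabla H(Z^*)$ and $A\,JZ^* = J\nabla H(Z^*)$ are exactly the infinitesimal content of the scaling and rotational symmetries of $H$; combined with the equilibrium condition $\nabla H(Z^*) = -\nu\Gamma Z^*$ and $J\Gamma = \Gamma J$, they give $BZ^* = 2\nu\Gamma Z^*$ and $B\,JZ^* = 0$, from which $SZ^* = 2\nu JZ^*$ and $S(JZ^*) = 0$ follow immediately (this is precisely the form $S|_V = \begin{pmatrix} 0 & 0 \\ 2\nu & 0\end{pmatrix}$ that the paper later quotes from the same reference). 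Your transpose computation ${}^tS\,\Gamma = -BJ$ is correct and cleanly yields $S$-invariance of $V^\perp$. For the splitting, the Gram matrix of $\{Z^*,JZ^*\}$ with respect to $b(X,Y) = {}^tX\Gamma Y$ is indeed $\mu I_2$ with $\mu = \sum_i\gamma_i|z_i^*|^2$, and the Euler relation at $Z^*$ gives $-\nu\mu = \tfrac12\sum_{i\neq j}\gamma_i\gamma_j$, so the hypothesis forces $\mu\neq 0$; nondegeneracy of $b|_V$ then yields $\R^{2N} = V\oplus V^\perp$ by the standard orthogonal-complement argument. There is no gap.
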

We can now formulate the second hypothesis.
\begin{hypothesis}\label{hyp:lyap}
We assume that $\sum_{i \neq j } \gamma_i \gamma_j \neq 0$ and that $Z^*$ is an equilibrium of the point-vortex dynamics in rotating frame~\eqref{eq:PVS_rot} which is linearly stable, in the sense that $S\big|_{V^\perp}$ can be block-diagonalized with blocks of the form $\begin{pmatrix} 0 & b \\ -b & 0\end{pmatrix}$.
\end{hypothesis}
More details about this definition are given in Section~\ref{sec:stab}.

\subsection{The confinement result}
With these two hypotheses, we prove the following long-time confinement result.
\begin{theorem}\label{theo:confinement}
    Let $\gamma_1,\ldots,\gamma_N \in \R\setminus\{0\}$, let $Z^* = (z_1^*,\ldots,z_N^*)$ satisfying both Hypotheses~\ref{hyp:strong_stabilité} and~\ref{hyp:lyap}. Then for every $(\omega_0^\eps)_{\eps >0}$ satisfying Hypothesis~\ref{hyp:omega} with $(z_1,\ldots,z_n) = Z^*$, for any $\beta<1/2$, for any $\alpha<\min(\beta/2,2-4\beta)$ and for any $\eps > 0$ small enough, the solution $\omega^\eps$ of the problem~\eqref{eq:du_pb} satisfies $\tau_{\eps,\beta} \ge \eps^{-\alpha}$.
\end{theorem}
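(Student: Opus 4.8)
The plan is to work along the rotating trajectory $z_i^*(t) = e^{\ic \nu t} z_i^*$ (with $\nu$ given by~\eqref{def:nu}, since $Z^*$ is a relative equilibrium) and to run a continuity (bootstrap) argument on $[0,\tau_{\eps,\beta})$. I decompose $\omega^\eps = \sum_{i=1}^N \omega_i^\eps$, where each $\omega_i^\eps$ is transported by the full field $u^\eps$, keeps a constant sign and mass $\int \omega_i^\eps = \gamma_i$, and is initially supported in $D(z_i^*,\eps)$. The two relevant quantities are the center of vorticity and the moment of inertia of each blob,
\[
  B_i(t) = \frac{1}{\gamma_i}\int y\,\omega_i^\eps(y,t)\,\dd y, \qquad \mu_i(t) = \frac{1}{\gamma_i}\int |y-B_i(t)|^2\,\omega_i^\eps(y,t)\,\dd y,
\]
so that $\mu_i(0)\le \eps^2$ and $|B_i(0)-z_i^*|\le \eps$. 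The bootstrap assumption is that on $[0,t]$ the support stays in the disks $D(B_i,\eps^\beta)$; from this I will recover quantitative bounds on $B_i$, then on $\mu_i$, then on the support, strong enough to forbid escape before $t=\eps^{-\alpha}$. Throughout, the blobs remain at mutual distance $\gtrsim 1$ and of radius $\lesssim \eps^\beta$, so $K$, $J_K$ and their derivatives are smooth and bounded on the relevant regions.

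\emph{Motion of the centers.} Differentiating $B_i$, the self-interaction drops out because $(y-y')\cdot K(y-y')=0$, and a Taylor expansion of each external field $u_j^\eps$ about $B_i$ (the linear term killed by $\int (y-B_i)\,\omega_i^\eps=0$) gives $\der{B_i}{t} = \sum_{j\neq i}\gamma_j K(B_i - B_j) + \mathcal{O}(\max_j \mu_j)$. Thus $t\mapsto (B_1,\dots,B_N)$ solves the point-vortex system~\eqref{PVS} up to a forcing of size $\mathcal{O}(\eps^2)$. This is exactly where Hypothesis~\ref{hyp:lyap} enters, through the lemma of Section~\ref{sec:stab}: the linearization $S$ generates a bounded (rotational) flow on $V^\perp$, while the neutral directions spanning $V$ are pinned by the conserved Hamiltonian $H$, the moment $\sum_i \gamma_i|z_i|^2$ and the center of vorticity $\sum_i \gamma_i z_i$. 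Consequently the deviation $\delta_i(t):=B_i(t)-z_i^*(t)$ does not grow secularly from the linear dynamics and only accumulates the forcing, so $|\delta_i(t)| \lesssim \eps + \eps^2 t$, which stays small up to $t=\eps^{-\alpha}$.

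\emph{Control of the spreading.} Differentiating $\mu_i$, the self-interaction again contributes exactly $0$, and the constant part of the external field is annihilated by the center-of-vorticity condition; only the strain survives at leading order,
\[
  \der{\mu_i}{t} \;\approx\; \frac{2}{\gamma_i}\int (y-B_i)\cdot \Sigma_i\,(y-B_i)\,\omega_i^\eps(y,t)\,\dd y, \qquad \Sigma_i := \sum_{j\neq i}\int J_K(B_i-y)\,\omega_j^\eps(y,t)\,\dd y.
\]
Expanding $\Sigma_i$ around the point-vortex positions yields $\Sigma_i = \sum_{j\neq i}\gamma_j J_K(z_i^*(t)-z_j^*(t)) + \mathcal{O}(\max_k|\delta_k| + \max_j \mu_j)$, and the first sum \emph{vanishes identically by Hypothesis~\ref{hyp:strong_stabilité}}. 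Together with the cubic remainder $\lesssim \eps^\beta \mu_i$, this gives the linear differential inequality $\der{\mu_i}{t} \lesssim (\max_k |\delta_k| + \max_j \mu_j + \eps^\beta)\mu_i$ with a small coefficient, whence $\mu_i(t)\lesssim \eps^2$ for all $t \le \eps^{-\alpha}$. The decisive point is that, because the first-order strain is switched off, the moment has \emph{no $\mathcal{O}(1)$ source term} and can grow only slowly.

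\emph{From moments to support, and the exponent.} A bound on $\mu_i$ confines most of the mass but not the support, so I would finally estimate the radial velocity $v_r(y)=\tfrac{(y-B_i)}{|y-B_i|}\cdot(u^\eps(y)-\der{B_i}{t})$ of the outermost vorticity at a radius $r\le \eps^\beta$. The far part of the self-field is controlled by the first moment, $\lesssim \sqrt{\mu_i}/r^2 \lesssim \eps/r^2$; the singular near-field is controlled by the conserved bound $\norm{\omega^\eps(t)}{\infty}\le M\eps^{-\eta}$; and the external contribution is $\lesssim (\max_k|\delta_k| + r)\,r$ thanks to the small strain. Inserting these into a differential inequality for the outer mass (equivalently, for the outermost radius) and integrating, the time needed to reach radius $\eps^\beta$ is at least $\eps^{-\alpha}$ as soon as $\alpha < \min(\beta/2,\,2-4\beta)$: the constraint $\beta/2$ reflects the drift induced by the neighbouring vortices (weaker than the single-vortex case of~\cite{Donati_Iftimie_2020}, whence $\beta/2$ rather than $\beta$), while $2-4\beta$ comes from balancing the self-induced spreading against the $L^\infty$ bound. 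Feeding all of this back closes the bootstrap. \emph{The main obstacle} is to close this self-referential loop uniformly over the long horizon $\eps^{-\alpha}$—the strain $\Sigma_i$ depends on the drift $\delta$ and the spreading $\mu$, which are in turn driven by $\Sigma_i$—and to handle the singular self-interaction so as to upgrade moment control into genuine support confinement. Both hypotheses are indispensable: Hypothesis~\ref{hyp:strong_stabilité} annihilates the leading strain, and Hypothesis~\ref{hyp:lyap} prevents the center deviations from growing secularly.
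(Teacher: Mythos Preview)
Your overall architecture matches the paper's: a bootstrap on $[0,\min(\tau_{\eps,\beta},\eps^{-\alpha})]$, control of the centers $B_i$ and second moments $\mu_i$, then an upgrade to support bounds via radial-velocity and large-moment estimates. You also locate correctly where each hypothesis acts (Hypothesis~\ref{hyp:strong_stabilité} kills the leading strain $\Sigma_i$; Hypothesis~\ref{hyp:lyap} tames the center drift).

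There is, however, a genuine gap in the drift step. You assert that the neutral directions $V=\vect\{Z^*,JZ^*\}$ are ``pinned by the conserved Hamiltonian $H$, the moment $\sum_i\gamma_i|z_i|^2$ and the center of vorticity'', and deduce $|\delta(t)|\lesssim\eps+\eps^2 t$. The angular momentum does control the dilation component (the $Z^*$-coordinate $y_1$), but nothing pins the rotation component (the $JZ^*$-coordinate $y_2$): the Hamiltonian is rotation-invariant and hence carries no phase information, and the total center of vorticity lives in the translation directions, which are not in $V$. In fact the restriction of $S$ to $V$ is \emph{nilpotent},
\[
S\big|_V=\begin{pmatrix}0&0\\ 2\nu&0\end{pmatrix}
\]
in the basis $(Z^*,JZ^*)$ (see the proof of Proposition~\ref{prop:stabilité}), so that $\dot y_2=2\nu y_1+\mathcal O(|\delta|^2)+\mathcal O(\eps)$ and the phase drifts secularly. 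On the bootstrap interval one only has $|\delta|\le\eps^\beta$, so the nonlinear remainder is $\mathcal O(\eps^{2\beta})$, which dominates the forcing since $\beta<1/2$; integrating gives $|y_1|\lesssim\eps^{2\beta-\alpha}$ and then $|y_2|\lesssim\eps^{2\beta-2\alpha}$. This double loss of $\eps^{-\alpha}$ is precisely the origin of the restriction $\alpha<\beta/2$ that you quote but do not derive: your claimed bound $\eps+\eps^2 t$ would yield no such constraint and is too optimistic.

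Two smaller issues. First, the order of your estimates is circular: you feed $\mu\lesssim\eps^2$ into the $B$-equation before having proved it, while your differential inequality for $\mu$ in turn involves $|\delta|$. The paper breaks this loop by first establishing the improved Lipschitz bound $|F_i^\eps(x,t)-F_i^\eps(y,t)|\le C\eps^\beta|x-y|$ on $D(z_i^*(t),\eps^\beta)$ directly from the bootstrap and Hypothesis~\ref{hyp:strong_stabilité} (Lemma~\ref{lem:lip_amélioré}); this alone gives $\dot\mu_i\le C\eps^\beta\mu_i$ and hence $\mu_i\le C\eps^2$ \emph{independently of} $\delta$, after which $\sqrt{\mu}\lesssim\eps$ is fed into the center equation. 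Second, your support step glosses over the essential technical tool: the paper controls the mass at distance $>r$ from $B_i$ by iterating on the moments of order $4n$ (Lemma~\ref{lem:4n}), and it is this estimate, plugged into the radial-velocity inequality (Lemma~\ref{lem:vitesse_radiale}), that closes the argument and produces the second constraint $\alpha<2-4\beta$.
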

Let us formulate a few remarks about Theorem~\ref{theo:confinement} and its hypotheses. As mentioned in the introduction, the big difference with the similar long-time confinement results obtained in \cite{marchiorobutta2018} and \cite{Donati_Iftimie_2020} is that we neither assume that $N=1$, nor that all the distances between the point-vortex grow quickly.

In \cite{Donati_Iftimie_2020}, the confinement result was obtained in a bounded domain $\Omega \subset \R^2$, for a single blob concentrated near a well-chosen point $x_0 \in \Omega$. This point must satisfy the existence of a conformal map $\mathcal{T} : \Omega \to B(0,1)$ such that $\mathcal{T}(x_0) = \mathcal{T}''(x_0) = \mathcal{T}'''(x_0) = 0$. In the present paper, Hypothesis~\ref{hyp:strong_stabilité} plays the role of this condition that ensures that the strain on the vortex blob is small. In \cite{marchiorobutta2018} it is the fact that point-vortices move away from each other that implies that the strain in each blob becomes small.

The main difference in the proof of Theorem~\ref{theo:confinement} compared to \cite{marchiorobutta2018} and \cite{Donati_Iftimie_2020} is that the control of the center of vorticity is much more delicate. Indeed, should for instance the point-vortex configuration be unstable, then the long-time confinement result would necessarily fail as the center of vorticity of the blobs would suffer from the same instability (see \cite{Donati_2024_DCDS} for an example of this situation). This is why we have to assume Hypothesis~\ref{hyp:lyap} which is the stability of the configuration. Hypothesis~\ref{hyp:lyap} is used in Section~\ref{sec:stab} to prove that the center of vorticity of each blob remains close to the initial configuration for a long enough time.

\subsection{Existence of the configuration}
Once Theorem~\ref{theo:confinement} is proved, we choose $\gamma_N$ so that the point-vortex configuration described in \eqref{eq:config} satisfies Hypotheses~\ref{hyp:strong_stabilité} and~\ref{hyp:lyap}.

For $N \ge 4$, let $\gamma_N$ be given by
\begin{equation}\label{def:gammaN}
    \gamma_N :=  \frac{(N-2)(N-6)}{12}.
\end{equation}
One can easily check in particular that such a $\gamma_N$ satisfies relations~\eqref{eq:range_gammaN}. We obtain the following.
 
\begin{theorem}\label{theo:construction}
    For every $N \ge 4$, the configuration described at relation~\eqref{eq:config} with $\gamma_N$ given by relation~\eqref{def:gammaN} satisfies both Hypotheses~\ref{hyp:strong_stabilité} and~\ref{hyp:lyap}. In the special case $N=6$ where $\gamma_6=0$, it must be understood in the sense that the configuration only consists of the $5$ exterior vortices.
\end{theorem}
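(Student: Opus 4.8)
The plan is to verify the two hypotheses directly by exploiting the symmetry of the configuration~\eqref{eq:config}. Since the outer vortices sit at the $(N-1)$-th roots of unity with equal intensity $1$ and the central vortex has intensity $\gamma_N$, the whole configuration is invariant under the cyclic rotation $z \mapsto \zeta z$. This rotational symmetry reduces everything to a computation at a single outer vortex, say $z_1^* = \zeta$, together with the central vortex. First I would record that $Z^*$ is indeed a relative equilibrium: this is classical for the regular polygon with central vortex, and the angular velocity $\nu$ is computed from~\eqref{def:nu}. One must also check the non-degeneracy condition $\sum_{i\neq j}\gamma_i\gamma_j \neq 0$ required by Hypothesis~\ref{hyp:lyap}; with $\gamma_1=\cdots=\gamma_{N-1}=1$ this sum is $(N-1)(N-2) + 2(N-1)\gamma_N$, which is nonzero for the chosen value of $\gamma_N$.

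For Hypothesis~\ref{hyp:strong_stabilité}, the key point is that $J_K$ is a traceless symmetric $2\times2$ matrix (the Hessian of the Green's function is harmonic and traceless off the diagonal), so it is determined by two real entries, equivalently by a single complex number encoding the local strain. Identifying $\C=\R^2$, the strain that vortex $i$ feels from vortex $j$ is proportional to $(z_i^*-z_j^*)^{-2}$ in complex notation. So the condition $\sum_{j\neq i}\gamma_j J_K(z_i^*-z_j^*)=0$ becomes, at the outer vortex $z_1^*=\zeta=1$ (rescaling by rotation), a single scalar identity
\begin{equation*}
    \sum_{k=2}^{N-1} \frac{1}{(1-\zeta^{k-1})^2} + \frac{\gamma_N}{(1-0)^2} = 0,
\end{equation*}
together with the automatically satisfied condition at the center by symmetry. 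The plan is to evaluate the lattice sum $\sum_{m=1}^{N-2}(1-\zeta^m)^{-2}$ over the nontrivial $(N-1)$-th roots of unity in closed form; this is a standard trigonometric/root-of-unity sum equal to $-(N-1)(N-3)/12$ (obtainable by differentiating the logarithmic derivative of $w^{N-1}-1$, or by the known formula $\sum \csc^2(\pi m/(N-1))$). Setting the total strain to zero then forces exactly $\gamma_N = (N-2)(N-6)/12$, matching~\eqref{def:gammaN}. I would also need to check the central vortex separately, but there the contributions from the symmetric ring of outer vortices cancel in pairs, so its strain vanishes identically regardless of $\gamma_N$.

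For Hypothesis~\ref{hyp:lyap}, the task is to show the stability matrix $S = \Gamma^{-1}J\,\Diff^2 H(Z^*) + \nu J$ restricted to $V^\perp$ block-diagonalizes into $2\times2$ rotation-generator blocks $\begin{pmatrix}0&b\\-b&0\end{pmatrix}$, i.e. that $S|_{V^\perp}$ has only purely imaginary eigenvalues with semisimple structure. The natural approach is to diagonalize using the discrete Fourier basis adapted to the cyclic $\ZZ/(N-1)$ symmetry: the Hessian $\Diff^2 H$ is a block-circulant matrix (up to the coupling with the central vortex), so Fourier modes $e^{2\pi\ic km/(N-1)}$ decouple $S$ into small blocks indexed by the Fourier frequency. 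Within each Fourier sector one gets a matrix of fixed small size whose eigenvalues can be computed explicitly as functions of $N$ and $\gamma_N$. The claim is then that for $\gamma_N=(N-2)(N-6)/12$ all these eigenvalues are purely imaginary, which is precisely the content of Theorem~\ref{theo:CS} once one checks that $\gamma_N$ lies in the stability range~\eqref{eq:range_gammaN}.

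The main obstacle will be the eigenvalue analysis for Hypothesis~\ref{hyp:lyap}, not Hypothesis~\ref{hyp:strong_stabilité}: the strain cancellation is a single explicit root-of-unity sum, whereas the stability requires controlling an entire family of Fourier-block eigenvalues and confirming they are imaginary and semisimple (no nontrivial Jordan blocks) on $V^\perp$. The cleanest route is to invoke Theorem~\ref{theo:CS} directly after verifying that $\gamma_N=(N-2)(N-6)/12$ satisfies~\eqref{eq:range_gammaN} for all $N\ge4$ — which is a straightforward polynomial inequality — so that the heavy linear-algebra already done in \cite{Cabral_Schmidt_1999_Stability_N+1_vortex} can be borrowed wholesale, leaving me only to reconcile their notion of nonlinear stability with the precise block-diagonal form demanded by Hypothesis~\ref{hyp:lyap}. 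The degenerate case $N=6$, where $\gamma_6=0$, must be handled as a genuine five-vortex regular pentagon; there I would check directly that the pentagon is a relative equilibrium satisfying both hypotheses, which is consistent with the general formula since $\gamma_6=0$ simply deletes the central vortex.
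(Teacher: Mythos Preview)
Your approach is essentially the same as the paper's: reduce by rotational symmetry to a single outer vortex (with the central vortex handled trivially), rewrite the strain condition as the complex identity $\sum_{m=1}^{N-2}(1-\zeta^m)^{-2}+\gamma_N=0$, evaluate the root-of-unity sum via the logarithmic derivative of $p(z)=1+z+\cdots+z^{N-2}$, and then invoke Theorem~\ref{theo:CS} for Hypothesis~\ref{hyp:lyap} after checking that $\gamma_N$ lies in the range~\eqref{eq:range_gammaN} and that $\sum_{i\neq j}\gamma_i\gamma_j\neq0$. The paper does exactly this, only passing through the real $J_K$ formula first before arriving at the same complex sum in Section~\ref{sec:construction}.

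One internal inconsistency to fix: you state the lattice sum equals $-(N-1)(N-3)/12$, but then conclude $\gamma_N=(N-2)(N-6)/12$; these are incompatible. The correct value is $\sum_{m=1}^{N-2}(1-\zeta^m)^{-2}=-(N-2)(N-6)/12$ (check $N=4$: the sum is $1/3$, not $-1/4$), which is what actually yields the desired $\gamma_N$. Also, ``$z_1^*=\zeta=1$'' is a slip --- you mean the vertex $z_{N-1}^*=\zeta^{N-1}=1$.
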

    In conclusion, we construct one suitable configuration for each $N \in \{4, 7,8,\ldots\}$, and two configurations for $N=5$: the regular pentagon and the $4+1$ configuration which is a square of vortices with a central vortex, but no configuration for $N=6$. 
\begin{remark}
    Theorems~\ref{theo:confinement} and \ref{theo:construction} together prove Theorem~\ref{theo:main}.
\end{remark}

\section{Stability of relative equilibria}\label{sec:stab}

Before proving the full confinement result -- Theorem~\ref{theo:confinement} -- we start with the special case where the vorticity is itself a point-vortex. The problem becomes a question of stability for the dynamical system: if $|Z^\eps(0)-Z^*| \le \eps$, does there exist $\alpha$ such that $|Z^\eps(t) - Z^*(t)| \le \eps^\beta$ for every $t \le \eps^{-\alpha}$?

In this section the coordinates $Z$ are written in the frame rotating at angular velocity $\nu$, so the point-vortex dynamics is given by~\eqref{eq:PVS_rot} and $Z^*$ is stationary.

To then study the real confinement problem with desingularized vorticity, one must be able to state this stability result not only for point-vortices $Z^\eps$ but for the centers of vorticity of each blob (which will be denoted by $B^\eps=(b_1^\eps,\ldots,b_N^\eps$)), which only satisfy the point-vortex dynamics up to some error of order $\eps$. This is why in the following we introduce an error term in the point-vortex dynamics.

All the configurations $Z^*$ that we will construct in Section~\ref{sec:construction} are nonlinearly stable. However, the time interval on which we study the solution being small compared to the inverse of the size of the confinement, linear stability is enough to prove our result. This is why Hypothesis~\ref{hyp:lyap} only asks for linear stability.

This section is devoted to the proof of the following proposition, which will be used for the proof of the Theorem~\ref{theo:confinement} applied to the center of vorticity of each blob.

\begin{proposition}\label{prop:stabilité}
Let $Z^*$ satisfying Hypothesis~\ref{hyp:lyap}. For every $\eps \in (0,1)$, let $T_\eps >0$ and $Z^\eps : [0,T_\eps) \mapsto \R^{2N}$ be such that 
\begin{equation}\label{pb:zeps}
    \begin{cases}
        |Z^\eps(0)-Z^*| \le \eps \vspace{2mm}\\
        \displaystyle \left|\Gamma\der{}{t} Z^\eps(t) - J\nabla H(Z^\eps(t)) -\nu J\Gamma Z^\eps(t)\right| \le C \eps, \quad \forall t \ge 0.
    \end{cases}
\end{equation}
Let $\beta<1/2$ and
\begin{equation*}
        \bar{\tau}_{\eps,\beta} = \sup \left\{ t \in [0,T_\eps) \, , \, \forall s \in [0,t] \, , \, |Z^\eps(s)-Z^*| \le \eps^{\beta} \right\}.
\end{equation*}
Then for every $\alpha < \beta/2$, there exists $\eps_0$ such that for every $\eps \in (0,\eps_0)$,
\begin{equation*}
    \bar{\tau}_{\eps,\beta} \ge \min(T_\eps,\eps^{-\alpha})
\end{equation*}
and for every $t \le \min(T_\eps,\eps^{-\alpha})$,
\begin{equation*}
    |Z^\eps(s)-Z^*| \le C \eps^{2\beta-2\alpha}.
\end{equation*}
\end{proposition}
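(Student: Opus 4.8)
The plan is to linearise the perturbed dynamics \eqref{pb:zeps} around $Z^*$ and run a continuity (bootstrap) argument, controlling the linearised flow through Hypothesis~\ref{hyp:lyap} together with the Hamiltonian structure. Writing $W(t) := Z^\eps(t) - Z^*$ and Taylor-expanding $\nabla H$ at $Z^*$, the equilibrium relation $\nabla H(Z^*) + \nu\Gamma Z^* = 0$ cancels the zeroth-order term; since $\Gamma^{-1}J\Gamma = J$ (as $\Gamma$ acts as a scalar on each $2\times2$ block of $J$), the first-order part is exactly the stability matrix $S = \Gamma^{-1}J\Diff^2 H(Z^*) + \nu J$. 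Dividing the second line of \eqref{pb:zeps} by $\Gamma$ therefore yields an equation of the form
\begin{equation*}
    \der{}{t}W(t) = S\,W(t) + R(W(t)) + F(t),
\end{equation*}
where $R(W) = \mathcal{O}(|W|^2)$ uniformly for $|W|$ small (the Hessian of $H$ is smooth near the collision-free configuration $Z^*$) and $|F(t)| \le C\eps$.

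The key structural input is a bound on the semigroup $e^{tS}$. On $V^\perp$, Hypothesis~\ref{hyp:lyap} makes $S|_{V^\perp}$ block-diagonal with rotation blocks, so $e^{tS}|_{V^\perp}$ is a rotation and $\|e^{tS}|_{V^\perp}\|$ is bounded uniformly in $t$. On $V = \vect\{Z^*, JZ^*\}$ I will use that $\nabla H$ is homogeneous of degree $-1$ and rotation-equivariant; differentiating these two relations at $Z^*$ gives $\Diff^2 H(Z^*)Z^* = -\nabla H(Z^*) = \nu\Gamma Z^*$ and $\Diff^2 H(Z^*)JZ^* = J\nabla H(Z^*) = -\nu J\Gamma Z^*$, whence $SZ^* = 2\nu JZ^*$ and $S(JZ^*)=0$. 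Thus $S|_V$ is nilpotent, $e^{tS}|_V = \mathrm{Id} + tS|_V$ grows only linearly, and since the splitting $V\oplus V^\perp$ induces an equivalent norm with bounded projections, I obtain the crucial bound $\|e^{tS}\| \le C(1+t)$.

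With Duhamel's formula $W(t) = e^{tS}W(0) + \int_0^t e^{(t-s)S}\big(R(W(s))+F(s)\big)\,\dd s$, I close the estimate on $[0,\bar\tau_{\eps,\beta}]\cap[0,\eps^{-\alpha}]$, where $|W| \le \eps^\beta$, hence $|R(W)| \le C\eps^{2\beta}$ and $|F| \le C\eps$. Using $\|e^{(t-s)S}\| \le C(1+t-s)$ and $\int_0^t (1+t-s)\,\dd s \le C(1+t^2)$, I get for $t \le \eps^{-\alpha}$
\begin{equation*}
    |W(t)| \le C(1+t)\eps + C(1+t^2)(\eps^{2\beta}+\eps) \le C\big(\eps^{1-\alpha} + \eps^{1-2\alpha} + \eps^{2\beta-2\alpha}\big).
\end{equation*}
Since $\beta < 1/2$ forces $2\beta < 1$, the dominant term is $\eps^{2\beta-2\alpha}$, which is exactly the announced refined bound $|W(t)| \le C\eps^{2\beta-2\alpha}$. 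The hypothesis $\alpha < \beta/2$ is then precisely what makes $2\beta - 2\alpha > \beta$, so that $C\eps^{2\beta-2\alpha} \le \tfrac12\eps^\beta$ for $\eps$ small. Consequently $|W|$ cannot reach the threshold $\eps^\beta$ before time $\min(T_\eps,\eps^{-\alpha})$: if one had $\bar\tau_{\eps,\beta} < \min(T_\eps,\eps^{-\alpha})$, continuity of $t\mapsto|W(t)|$ up to $\bar\tau_{\eps,\beta}<T_\eps$ would force $|W(\bar\tau_{\eps,\beta})| = \eps^\beta$, contradicting the strict bound just derived. This gives $\bar\tau_{\eps,\beta} \ge \min(T_\eps,\eps^{-\alpha})$ together with the refined estimate.

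I expect the main obstacle to be the linear-growth bound $\|e^{tS}\|\le C(1+t)$: the stability Hypothesis~\ref{hyp:lyap} only controls $V^\perp$, and the genuinely delicate point is to rule out any secular growth faster than linear on the neutral subspace $V$, which is achieved through the homogeneity and rotational invariance of $H$ that pin down $S|_V$ as nilpotent. Everything else reduces to a standard Duhamel/continuity bootstrap, and the precise thresholds $\beta<1/2$ and $\alpha<\beta/2$ enter only through the elementary comparison of the three exponents above.
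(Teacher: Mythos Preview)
Your proof is correct and takes a genuinely different route from the paper's. The paper works componentwise in the decomposition $\vect\{Z^*\}\oplus\vect\{JZ^*\}\oplus W_1\oplus\cdots\oplus W_{N-1}$: on each $W_i$ it exploits directly the antisymmetry $Y_i\cdot S_iY_i=0$ to bound $\tfrac{\dd}{\dd t}|Y_i|^2$ by the quadratic remainder plus the forcing, obtaining $|Y_i(t)|\le C\eps^{2\beta-\alpha}$ on $[0,\eps^{-\alpha}]$; on $V$ it then integrates the two scalar equations for $(y_1,y_2)$ using the explicit form $S|_V=\begin{pmatrix}0&0\\2\nu&0\end{pmatrix}$ (quoted from Roberts), which yields $|y_1|\le C\eps^{2\beta-\alpha}$ and $|y_2|\le C\eps^{2\beta-2\alpha}$. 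You instead package the same structural information into the single global semigroup bound $\|e^{tS}\|\le C(1+t)$ --- deriving the nilpotency of $S|_V$ yourself from the homogeneity and rotation-equivariance of $H$ rather than citing it --- and then close via Duhamel. The paper's componentwise argument is marginally sharper (it isolates the $JZ^*$-direction as the only one picking up the extra factor $\eps^{-\alpha}$), while your semigroup argument is cleaner and more systematic; both arrive at the same final bound $|W|\le C\eps^{2\beta-2\alpha}$ and the same threshold $\alpha<\beta/2$.
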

\begin{proof}
By Hypothesis~\ref{hyp:lyap}, for every $1\le i \le n-1$, there exists $W_i \subset V^\perp$ an invariant space of $V^\perp$ such that $V^\perp = \bigoplus_{i=1}^{n-1} W_i$ and $b_i$ such that
\begin{equation*}
    S_i:=S\big|_{W_i} = \begin{pmatrix} 0 & b_i \\ -b_i & 0\end{pmatrix}.
\end{equation*}
Let 
\begin{equation*}
    Z^\eps -Z^* = (y_1,y_2,Y_1,\ldots,Y_{N-1}) \in \vect\{Z^*\}\times \vect\{JZ^*\} \times W_1\times\ldots\times W_{n-1}. 
\end{equation*}
For every $i \in \{1,\ldots,N-1\}$ and $t \in [0,T_\eps)$, we have that
\begin{align*}
    \Big|\der{}{t} |Y_i(t)|^2\Big| & = 2 \Big|Y_i(t) \cdot \der{}{t} Y_i(t)\Big| \\
    & \le 2 \Big| Y_i(t) \cdot (J\nabla H(Z^\eps(t)) +\nu J\Gamma)Y_i(t)\Big| + 2 \Big|Y_i(t) \cdot \big( \der{}{t} Y_i(t) - (J\nabla H(Z^\eps(t)) +\nu J\Gamma)Y_i(t)\big) \Big| \\
    & \le  2 \Big| Y_i(t) \cdot S_iY_i(t) + Y_i(t)\cdot \mathcal{O}(|Z^\eps(t)-Z^*|^2) \Big| + C \eps|Y_i(t)| \\
    & \le |Y_i(t)| \big( \mathcal{O}(|Z^\eps(t)-Z^*|^2)+\eps\big),
\end{align*}
where we used the fact that $ Y_i \cdot S_iY_i  = 0$.
Since $|Z^\eps(t)-Z^*| \le \eps^\beta$ for every $t \le \bar{\tau}_{\eps,\beta}$ and since $\beta < 1/2$, we have that
\begin{equation*}
    \der{}{t} Y_i(t) = \mathcal{O}(\eps^{2\beta}).
\end{equation*}
In conclusion, for every $\alpha>0$ and every $t \le \min(\bar{\tau}_{\eps,\beta} , \eps^{-\alpha})$, we obtain that
\begin{equation*}
    |Y_i(t)| \le C \eps^{2\beta-\alpha}.
\end{equation*}
Now we recall from~\cite{Roberts_2013_Stability_of_Relative_Eq} that
\begin{equation*}
    S\big|_V = \begin{pmatrix} 0 & 0 \\ 2 \nu & 0 \end{pmatrix}.
\end{equation*}
We have the following system:
\begin{equation*}
\begin{split}
    \der{}{t} y_1 & = \mathcal{O}(|Z^\eps(t)-Z^*|^2) + \mathcal{O}(\eps)\\
    \der{}{t} y_2 & = 2\nu y_1 +  \mathcal{O}(|Z^\eps(t)-Z^*|^2) + \mathcal{O}(\eps),
\end{split}
\end{equation*}
from which we deduce that $|y_1(t)| \le C\eps^{2\beta-\alpha}$ and thus that $|y_2(t)| \le C \eps^{2\beta-2\alpha}$.
In conclusion, $|Z^\eps(t)-Z^*| \le C \eps^{2\beta-2\alpha}$ for every $t \le \min(\bar{\tau}_{\eps,\beta},\eps^{-\alpha})$ and thus by definition of $\bar{\tau}_{\eps,\beta}$ for every $\alpha < \beta/2$ and $\eps$ small enough,
\begin{equation*}
    \bar{\tau}_{\eps,\beta} \ge \eps^{-\alpha}.
\end{equation*}
\end{proof}

\section{Confinement}\label{sec:confinement}
In this section we prove Theorem~\ref{theo:confinement}. Some parts of the proof can be directly adapted from \cite{Donati_Iftimie_2020}, so we recall the main steps and only detail the parts that differ from \cite{Donati_Iftimie_2020}.

\subsection{Plan of the proof}

Let $\gamma_1,\ldots,\gamma_N \in \R\setminus\{0\}$ and $Z^* \in (\R^2)^N$ satisfying Hypotheses~\ref{hyp:strong_stabilité} and~\ref{hyp:lyap}, and let $(\omega_0^\eps)_{\eps>0}$ satisfying Hypothesis~\ref{hyp:omega} for this configuration $Z^*$. For every $\eps > 0$, let $t \mapsto \omega^\eps(t)$ be the $L^1 \cap L^\infty$ solution of \eqref{eq:du_pb}. Let $\alpha < \min(\beta/2, 2-4\beta)$. Let $\eps > 0$ which will be chosen small enough. We work on the time interval $[0, \min(\tau_{\eps,\beta},\eps^{-\alpha})]$ and prove that necessarily, $\min(\tau_{\eps,\beta},\eps^{-\alpha}) = \eps^{-\alpha}$, namely that $\tau_{\eps,\beta} \ge \eps^{-\alpha}$.

For each $i \in \{1,\ldots,N\}$, we start by introducing the exterior field
\begin{equation}\label{def:F}
    F_i^\eps(x,t) = \int_{\R^2} \sum_{j\neq i} K(x-z) \omega_j^\eps(z,t)\dd z
\end{equation}
so that each blob $\omega_i^\eps$ satisfies
\begin{equation}\label{eq:pb_u_i}
    \begin{cases}
        \displaystyle \partial_t \omega_i^\eps + (u_i^\eps + F_i^\eps) \cdot \nabla  \omega_i^\eps = 0, \\
        u_i^\eps(t) =  K \star \omega_i^\eps(t).
    \end{cases}
\end{equation}
We then introduce the centers of vorticity of each blob
\begin{equation*}
    b_i^\eps(t) = \frac{1}{\gamma_i}\int_{\R^2}  x \omega_i^\eps(x,t)\dd x,
\end{equation*}
the normalized second moment
\begin{equation*}
    I_i^\eps(t) = \frac{1}{\gamma_i}\int_{\R^2}  |x-b_i^\eps(t)|^2 \omega_i^\eps(x,t)\dd x,
\end{equation*}
which is positive by Hypothesis~\ref{hyp:omega}.
Finally we denote by $$B^\eps = (b_1^\eps,\ldots,b_N^\eps)$$ and $$I^\eps = \max_{1\le i \le N} I_i^\eps.$$

The first step we have to perform is to prove the following Lipschitz estimate for the exterior fields $F_i^\eps$.

\begin{lemma}\label{lem:lip_amélioré}
    There exists a constant $C$ such that for every $i \in \{ 1, \ldots, N\}$, for every $t \le \min(\tau_{\eps,\beta},\eps^{-\alpha})$ and for every $x,y \in B(z_i^*(t),\eps^\beta)$,
    \begin{equation*}
        \big|F_i^\eps(x,t) - F_i^\eps(y,t) \big| \le C \eps^\beta |x-y|.
    \end{equation*}
\end{lemma}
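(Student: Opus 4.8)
The plan is to reduce the desired Lipschitz estimate to a pointwise bound on the Jacobian $J_{F_i^\eps}(\cdot,t)$ of the exterior field on the ball $B(z_i^*(t),\eps^\beta)$, and then to extract the gain of a factor $\eps^\beta$ from Hypothesis~\ref{hyp:strong_stabilité}. Since $x,y \in B(z_i^*(t),\eps^\beta)$, the whole segment $[x,y]$ lies in that ball, so by the fundamental theorem of calculus
\[
F_i^\eps(x,t)-F_i^\eps(y,t) = \left(\int_0^1 J_{F_i^\eps}\big(y+s(x-y),t\big)\,\dd s\right)(x-y),
\]
and it suffices to show $|J_{F_i^\eps}(w,t)| \le C\eps^\beta$ for every $w \in B(z_i^*(t),\eps^\beta)$ and every $t \le \min(\tau_{\eps,\beta},\eps^{-\alpha})$. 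Differentiating \eqref{def:F} under the integral sign gives $J_{F_i^\eps}(w,t) = \sum_{j\neq i}\int_{\R^2} J_K(w-z)\,\omega_j^\eps(z,t)\,\dd z$. Here I use that, since the initial configuration is the relative equilibrium $Z^*$, the point-vortex trajectory is exactly the vortex crystal $z_i(t)=z_i^*(t)$.

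The key point is that the naive bound, using only that $J_K$ is bounded on the region where $|w-z|$ is of order one, produces a Lipschitz constant of order one, which is far too weak; the gain must come from Hypothesis~\ref{hyp:strong_stabilité}. Since $\int \omega_j^\eps(\cdot,t)=\gamma_j$ by Hypothesis~\ref{hyp:omega}, that hypothesis can be rewritten as $\sum_{j\neq i}\int_{\R^2} J_K\big(z_i^*(t)-z_j^*(t)\big)\,\omega_j^\eps(z,t)\,\dd z = 0$, and subtracting this vanishing quantity yields
\[
J_{F_i^\eps}(w,t) = \sum_{j\neq i}\int_{\R^2}\Big[J_K(w-z)-J_K\big(z_i^*(t)-z_j^*(t)\big)\Big]\,\omega_j^\eps(z,t)\,\dd z.
\]
In other words, Hypothesis~\ref{hyp:strong_stabilité} cancels the leading, constant contribution to the strain felt by blob $i$, leaving only a difference of two values of $J_K$ at nearby points.

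It then remains to control this difference. For $t \le \tau_{\eps,\beta}$ one has $\supp\omega_j^\eps(\cdot,t)\subset B(z_j^*(t),\eps^\beta)$, so for such $z$ and for $w\in B(z_i^*(t),\eps^\beta)$ the two arguments satisfy
\[
\big|(w-z)-\big(z_i^*(t)-z_j^*(t)\big)\big|\le |w-z_i^*(t)|+|z-z_j^*(t)|\le 2\eps^\beta.
\]
Because the configuration rotates rigidly, the separations $|z_i^*(t)-z_j^*(t)|=|z_i^*-z_j^*|$ are bounded below by a fixed constant $d_0>0$ for $i\neq j$; hence, for $\eps$ small, the segment joining $w-z$ to $z_i^*(t)-z_j^*(t)$ stays at distance at least $d_0/2$ from the origin, a region on which $\Diff J_K$ is bounded. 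The mean value theorem then gives $|J_K(w-z)-J_K(z_i^*(t)-z_j^*(t))|\le C\eps^\beta$, and integrating against $|\omega_j^\eps(\cdot,t)|$, whose total mass is $|\gamma_j|$, yields $|J_{F_i^\eps}(w,t)|\le C\eps^\beta$ and hence the claim.

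The only genuinely delicate point is conceptual rather than computational: recognizing that the constant term of the exterior strain $J_{F_i^\eps}$ must be killed by Hypothesis~\ref{hyp:strong_stabilité}, rewritten through the mass normalization $\int\omega_j^\eps=\gamma_j$, in order to gain the factor $\eps^\beta$. Once this cancellation is in place, the remainder reduces to standard bounds on the smooth kernel $J_K$ and its derivative away from the origin, controlled by the fixed inter-vortex separation $d_0$.
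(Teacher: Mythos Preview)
Your proof is correct and follows essentially the same route as the paper's: both hinge on subtracting the vanishing sum $\sum_{j\neq i}\gamma_j J_K(z_i^*(t)-z_j^*(t))$ furnished by Hypothesis~\ref{hyp:strong_stabilité} and then using the Lipschitz bound on $J_K$ away from the origin, controlled by the fixed inter-vortex separation. The only cosmetic difference is that you bound $J_{F_i^\eps}$ pointwise on the ball and invoke the fundamental theorem of calculus along the segment $[x,y]$, whereas the paper Taylor-expands $K(x-z)-K(y-z)$ directly around $z_i^*(t)-z$; the underlying estimates are identical.
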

We prove it in Section~\ref{sec:lip}. From this lemma, we deduce estimates on $B^\eps$ and $I^\eps$ as follows.
\begin{lemma}\label{lem:moments}
    There exists a constant $C$ such that for every $t \le \min(\tau_{\eps,\beta},\eps^{-\alpha})$, $$I^\eps(t) \le C \eps^2$$ and $$|B^\eps(t)-Z^*(t)| \le C\eps^{2\beta-2\alpha}.$$
\end{lemma}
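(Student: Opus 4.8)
The plan is to prove the two estimates in Lemma~\ref{lem:moments} simultaneously by a bootstrap argument, deriving coupled differential inequalities for the second moment $I^\eps$ and for the deviation $|B^\eps - Z^*|$, and then invoking Proposition~\ref{prop:stabilité} to close the estimate on $B^\eps$. Throughout we work on $[0,\min(\tau_{\eps,\beta},\eps^{-\alpha})]$, so that by definition every blob $\omega_i^\eps$ is supported in $B(z_i^*(t),\eps^\beta)$ and the Lipschitz bound of Lemma~\ref{lem:lip_amélioré} is available.

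\medskip

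First I would treat the second moment. Differentiating $I_i^\eps$ in time and using the transport equation~\eqref{eq:pb_u_i}, the self-interaction term $u_i^\eps$ contributes nothing to the growth of $I_i^\eps$ (the self-induced velocity conserves all moments of a single blob by antisymmetry of $K$), so the only contribution comes from the exterior field $F_i^\eps$. Writing $\der{}{t} I_i^\eps = \frac{2}{\gamma_i}\int (x-b_i^\eps)\cdot\big(F_i^\eps(x,t)-F_i^\eps(b_i^\eps,t)\big)\,\omega_i^\eps(x,t)\,\dd x$, where the subtraction of the constant $F_i^\eps(b_i^\eps,t)$ is justified because $\int (x-b_i^\eps)\omega_i^\eps = 0$, I would apply Lemma~\ref{lem:lip_amélioré} to bound the integrand by $C\eps^\beta|x-b_i^\eps|^2$. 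This yields $\big|\der{}{t} I_i^\eps\big| \le C\eps^\beta I_i^\eps$, and Grönwall over a time of length $\eps^{-\alpha}$ gives $I^\eps(t) \le I^\eps(0)\exp(C\eps^{\beta-\alpha})$. Since $\alpha<\beta$ the exponential is bounded and $I^\eps(0)\le C\eps^2$ by Hypothesis~\ref{hyp:omega} (the initial blob has diameter $\le 2\eps$), we obtain $I^\eps(t)\le C\eps^2$ as claimed.

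\medskip

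Next I would control $B^\eps$. Differentiating $b_i^\eps$ and using~\eqref{eq:pb_u_i}, the self-interaction again drops out, leaving $\gamma_i\der{}{t} b_i^\eps = \int F_i^\eps(x,t)\,\omega_i^\eps(x,t)\,\dd x$. The key observation is that this is close to the point-vortex velocity: Taylor-expanding $F_i^\eps$ around $b_i^\eps$ and using $\int(x-b_i^\eps)\omega_i^\eps=0$ together with the Lipschitz and a second-order control, the discrepancy between $\der{}{t} b_i^\eps$ and $\sum_{j\neq i}\gamma_j K(b_i^\eps-b_j^\eps)$ is quadratic in the blob size, hence of order $\eps^\beta I^\eps \le C\eps^{\beta}\cdot\eps^2$, which is $\mathcal{O}(\eps)$ once $\beta$-powers are accounted for. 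Thus $B^\eps$ satisfies the perturbed point-vortex system~\eqref{pb:zeps} with an error of size $C\eps$, and the initial condition $|B^\eps(0)-Z^*|\le C\eps$ follows from Hypothesis~\ref{hyp:omega}. Proposition~\ref{prop:stabilité} then directly yields $|B^\eps(t)-Z^*(t)|\le C\eps^{2\beta-2\alpha}$ for $t\le\min(T_\eps,\eps^{-\alpha})$, where $T_\eps=\min(\tau_{\eps,\beta},\eps^{-\alpha})$.

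\medskip

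The main obstacle is establishing that the error term in the $B^\eps$ equation is genuinely of order $\eps$ rather than merely $\eps^{2\beta}$, because $2\beta<1$ would be too weak to feed into Proposition~\ref{prop:stabilité}. This requires more than the plain Lipschitz bound of Lemma~\ref{lem:lip_amélioré}: one must exploit Hypothesis~\ref{hyp:strong_stabilité}, which makes the linear (strain) part of the exterior field vanish at $z_i^*$, so that the leading error in approximating $\int F_i^\eps\omega_i^\eps$ by the point-vortex field is controlled by the second moment $I^\eps\le C\eps^2$ against a \emph{second} derivative of $F_i^\eps$ rather than the first. Making this second-order cancellation precise — keeping careful track that the center $b_i^\eps$ sits within $\eps^{2\beta-2\alpha}$ of $z_i^*$ while the strain-vanishing holds exactly at $z_i^*$ — is the delicate point, and it is precisely here that the strong stability assumption, rather than mere growth of inter-vortex distances as in~\cite{marchiorobutta2018}, does the essential work.
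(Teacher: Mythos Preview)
Your treatment of $I^\eps$ is essentially the paper's: derive $|\der{}{t}I_i^\eps| \le C\eps^\beta I_i^\eps$ from Lemma~\ref{lem:lip_amélioré}, apply Grönwall over $[0,\eps^{-\alpha}]$, done. Your plan for $B^\eps$ --- show that $B^\eps$ solves the point-vortex system up to an $\mathcal{O}(\eps)$ error and then invoke Proposition~\ref{prop:stabilité} (after passing to the rotating frame, which you should mention since~\eqref{pb:zeps} is written there) --- is also the paper's strategy.

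Where you go astray is in the last paragraph: the ``main obstacle'' you flag does not exist. The error
\[
\der{}{t} b_i^\eps - \sum_{j\neq i}\gamma_j K(b_i^\eps - b_j^\eps) \;=\; \frac{1}{\gamma_i}\sum_{j\neq i}\iint\big[K(x-y)-K(b_i^\eps-b_j^\eps)\big]\,\omega_i^\eps(x)\,\omega_j^\eps(y)\,\dd x\,\dd y
\]
is bounded by $C\sqrt{I^\eps}\le C\eps$ using only the ordinary Lipschitz estimate $|K(x-y)-K(b_i^\eps-b_j^\eps)|\le C(|x-b_i^\eps|+|y-b_j^\eps|)$ --- valid because the blobs are well separated on $[0,\tau_{\eps,\beta}]$ --- together with the first-moment bound $\int|x-b_i^\eps|\,|\omega_i^\eps|/|\gamma_i| \le \sqrt{I_i^\eps}$. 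This is exactly what the paper does, and it uses neither Hypothesis~\ref{hyp:strong_stabilité} nor any second-order cancellation. Your Taylor-expansion route (killing the linear term via $\int(x-b_i^\eps)\omega_i^\eps=0$ and $\int(y-b_j^\eps)\omega_j^\eps=0$, then bounding the remainder by $CI^\eps$) also works and even yields the sharper $\mathcal{O}(\eps^2)$, but it too does not need Hypothesis~\ref{hyp:strong_stabilité}; the only input is that $K$ is smooth away from the diagonal.

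In short, Hypothesis~\ref{hyp:strong_stabilité} enters Lemma~\ref{lem:moments} only indirectly, through Lemma~\ref{lem:lip_amélioré} in the $I^\eps$ step, not in the $B^\eps$ step. The genuinely new ingredient for controlling $B^\eps$ --- compared to \cite{marchiorobutta2018,Donati_Iftimie_2020} --- is the linear-stability input Hypothesis~\ref{hyp:lyap} via Proposition~\ref{prop:stabilité}, not the strain-vanishing hypothesis.
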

We prove it in Section~\ref{sec:proof_control}. Although the last estimate varies from what was obtained in \cite{Donati_Iftimie_2020}, namely that $|B^\eps(t)-Z^*(t)| \le C \eps$ (with $N=1$), what matters is only that $|B^\eps(t)-Z^*(t)| = o(\eps^\beta)$.

Let us define
\begin{equation*}
    R_i^\eps(t) = \max\big\{|x-b_i^\eps(t)|;\ x\in \supp \omega_i^\eps(t) \big\}
\end{equation*}
and choose some $X(t)\in  \supp \omega_i(t)$ such that $|X(t)-B_i(t)|=R_i(t)$. We denote by $s\mapsto X_t(s)$ the trajectory passing through $X(t)$ at time $t$ so that $X_t(t)=X(t)$. Then, we have the following lemma used to estimate the growth of the support.
\begin{lemma}\label{lem:vitesse_radiale}
For any $t\leq 	\tau_{\eps,\beta}$ we have that
\begin{equation}\label{eq:support}
    \der{}{s}|X_t(s)-b_i^\eps(s)|\Big|_{s=t} \le C_1 \eps^\beta R_i^\eps(t) + C_2\frac{I(t)}{(R_i^\eps(t))^3} + C_3\left(M\eps^{-\eta} \int_{|x-b_i^\eps|>R_i^\eps(t)/2}\omega_i^\eps(x,t)\dd x\right)^\frac{1}{2}.
\end{equation}
\end{lemma}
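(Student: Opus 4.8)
The plan is to differentiate the distance $|X_t(s)-b_i^\eps(s)|$ at $s=t$ and to split the resulting radial velocity into the contribution of the exterior field $F_i^\eps$, controlled by Lemma~\ref{lem:lip_amélioré}, and that of the self-induced field $u_i^\eps$, which is the genuinely delicate part. First I would record the two equations of motion. The fluid particle satisfies $\der{}{s}X_t(s) = u_i^\eps(X_t(s),s) + F_i^\eps(X_t(s),s)$ since the total velocity on $\supp\omega_i^\eps$ is $u_i^\eps+F_i^\eps$. For the center of vorticity, multiplying the transport equation~\eqref{eq:pb_u_i} by $x$, integrating by parts (both fields being divergence free), and using that $K$ is odd so that $\int u_i^\eps\,\omega_i^\eps\,\dd x = 0$ by antisymmetry, I get $\der{}{s}b_i^\eps(s) = \frac{1}{\gamma_i}\int F_i^\eps(x,s)\,\omega_i^\eps(x,s)\,\dd x$. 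Writing $e = (X(t)-b_i^\eps(t))/R_i^\eps(t)$, this yields
\[
    \der{}{s}\big|X_t(s)-b_i^\eps(s)\big|\Big|_{s=t}
    = e\cdot u_i^\eps(X(t),t)
    + e\cdot\Big(F_i^\eps(X(t),t) - \frac{1}{\gamma_i}\int F_i^\eps(x,t)\,\omega_i^\eps(x,t)\,\dd x\Big).
\]

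The exterior term is immediate. Since $\frac{1}{\gamma_i}\int\omega_i^\eps = 1$, the second bracket equals $\frac{1}{\gamma_i}\int\big(F_i^\eps(X(t),t)-F_i^\eps(x,t)\big)\omega_i^\eps(x,t)\,\dd x$; for $t\le\tau_{\eps,\beta}$ both $X(t)$ and $\supp\omega_i^\eps(t)$ lie in $D(z_i^*(t),\eps^\beta)$, so Lemma~\ref{lem:lip_amélioré} bounds this by $C\eps^\beta\frac{1}{\gamma_i}\int|X(t)-x|\,\omega_i^\eps\,\dd x\le 2C\eps^\beta R_i^\eps(t)$, which is the $C_1\eps^\beta R_i^\eps(t)$ term.

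The self-velocity term $e\cdot u_i^\eps(X(t),t) = \int e\cdot K(X(t)-y)\,\omega_i^\eps(y,t)\,\dd y$ I would split over the inner region $\{|y-b_i^\eps|\le R_i^\eps/2\}$ and its complement. On the inner region $|X(t)-y|\ge R_i^\eps/2$, and I Taylor expand $y\mapsto e\cdot K(X(t)-y)$ at $y=b_i^\eps$. The zeroth-order term vanishes because $K(R_i^\eps e)$ is colinear with $e^\perp$, hence orthogonal to $e$. The first-order term would vanish against the \emph{full} support via the identity $\int(y-b_i^\eps)\omega_i^\eps=0$; restricted to the inner region it equals minus its outer integral, so with $|\nabla K(z)|\le C|z|^{-2}$ and $|y-b_i^\eps|\le R_i^\eps$ on the support it is bounded by $C(R_i^\eps)^{-1}\mu$, where $\mu := \int_{|y-b_i^\eps|>R_i^\eps/2}\omega_i^\eps$; using $\mu\le\|\omega_i^\eps\|_\infty\,\pi(R_i^\eps)^2\le\pi M\eps^{-\eta}(R_i^\eps)^2$ this is $\le C(M\eps^{-\eta}\mu)^{1/2}$, absorbed into the third term. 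The second-order remainder uses $|\Diff^2 K(z)|\le C|z|^{-3}$ on the inner region, giving $C(R_i^\eps)^{-3}\int_{\text{in}}|y-b_i^\eps|^2\omega_i^\eps\le C\gamma_i I_i^\eps/(R_i^\eps)^3\le C_2 I(t)/(R_i^\eps)^3$. On the outer region I bound $|e\cdot K(X(t)-y)|\le(2\pi|X(t)-y|)^{-1}$ and apply the standard $L^1$–$L^\infty$ interpolation, splitting the integral at radius $\rho$ from $X(t)$ and optimizing, to get $\int\frac{\omega_i^\eps\,\Ind_{\text{out}}}{|X(t)-y|}\,\dd y\le C(\|\omega_i^\eps\|_\infty\,\mu)^{1/2}\le C_3(M\eps^{-\eta}\mu)^{1/2}$.

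The main obstacle is precisely the analysis of $e\cdot u_i^\eps(X(t),t)$: one must simultaneously exploit the two exact cancellations (the tangential nature of the velocity induced by a mass at the center, and the vanishing first moment about $b_i^\eps$) and keep the singular near-diagonal part of $K$ under control through the $L^1$–$L^\infty$ estimate. The subtle point is to verify that the leftover first-order contribution produced by restricting the expansion to the inner ball is genuinely dominated by the $L^\infty$-based third term — which is what the elementary bound $\mu\le\|\omega_i^\eps\|_\infty\,\pi(R_i^\eps)^2$ guarantees — rather than producing a new uncontrolled term.
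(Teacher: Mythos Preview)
Your argument is correct and follows precisely the approach the paper invokes from \cite[Lemma 4.2]{Donati_Iftimie_2020}: differentiate the radius, bound the exterior-field contribution via Lemma~\ref{lem:lip_amélioré}, split the self-induced velocity according to $\{|y-b_i^\eps|\le R_i^\eps/2\}$, and exploit the two cancellations (orthogonality of $K(X-b_i^\eps)$ to $e$ and vanishing first moment) together with the $L^1$--$L^\infty$ interpolation on the outer shell. Your handling of the first-order leftover via $\mu\le \pi M\eps^{-\eta}(R_i^\eps)^2$ is exactly the step that makes this term fall under the third constant $C_3$.
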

Equipped with Lemma~\ref{lem:lip_amélioré}, we skip the proof of Lemma~\ref{lem:vitesse_radiale} since it is completely identical to \cite[Lemma 4.2]{Donati_Iftimie_2020}. The final term we need to estimate is the integral in relation~\eqref{eq:support}.
\begin{lemma}\label{lem:4n}
For every $k \ge 1$ and $t\leq \min(\tau_{\eps,\beta},\eps^{-\beta})$ there exists a small constant $\eps_0=\eps_0(k)$, a large constant $C(k)$ and a constant $K_4$ which depends only $Z^*$, such that if 
\begin{equation*}
\eps\leq\eps_0\qquad\text{and}\qquad    r^4 \ge K_4\eps^{2}\left( 1 + kt\ln(2+t)\right),
\end{equation*}
then for every $i \in \{1,\ldots,N\}$,
\begin{equation*}
    \int_{|x-b_i^\eps|>r} \omega_i^\eps(x,t)\dd x \le  C(k)\frac{\eps^{k/2}}{r^k}.
\end{equation*}
\end{lemma}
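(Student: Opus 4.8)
The lemma asserts a moment-type tail bound: the mass of blob $i$ located at distance $>r$ from its center of vorticity $b_i^\eps$ is super-algebraically small in $\eps$, provided $r^4 \gtrsim \eps^2(1+kt\ln(2+t))$. This is the standard "localization of vorticity" estimate (in the spirit of Marchioro–Pulvirenti and the quantitative refinements of Buttà–Marchioro), and the engine behind it is tracking the evolution of a weighted mass functional.

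**The plan.** I would fix $i$ and, for a radius $r$ and a large integer $p$ (tuned to $k$ at the end), introduce the weighted mass
\begin{equation*}
    m_p(t) = \int_{\R^2} \left(\frac{|x-b_i^\eps(t)|}{r}\right)^{p}_{+} \,\omega_i^\eps(x,t)\,\dd x,
\end{equation*}
where the subscript $+$ means the weight is cut off to be supported outside, say, $|x-b_i^\eps|>r/2$ (one smooths a bump function $\varphi\big((x-b_i^\eps)/r\big)$ rather than using the bare power, so that derivatives are controlled). Differentiating in time and using equation~\eqref{eq:pb_u_i}, the self-interaction velocity $u_i^\eps$ contributes a term that is handled by the antisymmetry of the Biot–Savart kernel (the classical cancellation giving a bound like $C\,I_i^\eps/r^3$ times lower-order mass), the exterior field $F_i^\eps$ contributes a term controlled by the Lipschitz estimate of Lemma~\ref{lem:lip_amélioré}, and the motion of the center $\dot b_i^\eps$ contributes another drift term. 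First I would use Lemma~\ref{lem:moments} ($I^\eps \le C\eps^2$) to feed the self-interaction bound, which is where the $\eps^2$ on the right-hand side of the hypothesis originates.

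**Assembling the differential inequality.** The key step is to convert these three contributions into a closed differential inequality of Grönwall type for $m_p$, schematically
\begin{equation*}
    \der{}{t} m_p(t) \le \frac{C p}{r}\Big( \eps^\beta r + \frac{C\eps^2}{r^3} \Big) m_p(t) + (\text{forcing from the mass just inside } r/2),
\end{equation*}
where the forcing term is bounded using the $L^\infty$ control $|\omega_i^\eps|\le M\eps^{-\eta}$ together with the fact that the mass sitting in the annulus $r/2<|x-b_i^\eps|<r$ occupies area $\lesssim r^2$, giving a contribution $\lesssim M\eps^{-\eta} r^2$. Integrating over $[0,t]$, using $m_p(0)=0$ (since $\supp\omega_{0,i}^\eps\subset D(z_i,\eps)$ and $\eps\ll r$), and then applying Markov's inequality
\begin{equation*}
    \int_{|x-b_i^\eps|>r}\omega_i^\eps(x,t)\,\dd x \le m_p(t),
\end{equation*}
yields a bound of the form $C(p)\,\eps^{?}\,r^{-p}\exp(\cdots)$; the precise threshold $r^4\ge K_4\eps^2(1+kt\ln(2+t))$ is exactly what is needed to absorb the accumulated exponential growth factor and the powers of $\eps^{-\eta}$, after which one sets $p=k$ (or $p\sim k$, absorbing constants into $C(k)$, $\eps_0(k)$) to get the claimed $\eps^{k/2}r^{-k}$. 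I would import this computation wholesale from the analogous estimate in \cite{marchiorobutta2018} and \cite{Donati_Iftimie_2020}, indicating only the modifications.

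**Main obstacle.** The delicate point is not any single term but the bookkeeping that produces the sharp power $\eps^{k/2}$ and the exact form of the threshold $r^4\ge K_4\eps^2(\cdots)$: one must track how the self-interaction bound $\eps^2/r^3$, the exterior strain $\eps^\beta$, and the $\ln(2+t)$ growth of the Grönwall factor interact, and verify that the drift from $\dot b_i^\eps$ (controlled via Lemma~\ref{lem:moments}) does not worsen the exponent. The essential new input compared to the single-blob case of \cite{Donati_Iftimie_2020} is that the exterior field $F_i^\eps$ is now genuinely nontrivial, but Lemma~\ref{lem:lip_amélioré} reduces its effect to the same $\eps^\beta$-Lipschitz contribution already present in the single-vortex-in-a-domain analysis, so the proof structure transfers with the Lipschitz constant of the ambient field replaced by $C\eps^\beta$. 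I therefore expect the proof to be a careful adaptation rather than a new argument, with the only real work being the verification that the constants combine to give precisely the stated threshold.
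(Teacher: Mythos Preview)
Your overall strategy and identification of the key input (the $\eps^\beta$-Lipschitz bound on $F_i^\eps$ from Lemma~\ref{lem:lip_amélioré}, itself a consequence of Hypothesis~\ref{hyp:strong_stabilité}) are correct, but the functional you track and the resulting structure differ from the paper. The paper does not use a cutoff mass with a closed Grönwall inequality; it follows \cite{Donati_Iftimie_2020} and \cite{Iftimie99Sideris} by tracking the full $4n$-th moments $m_{n,i}^\eps(t)=\int|x-b_i^\eps|^{4n}\omega_i^\eps\,\dd x$ and deriving the \emph{recursive} estimate $|\dot m_{n}|\le Cn^2\eps^2\,m_{n-1}$. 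The self-interaction part $a_{n,i}^\eps$ is bounded by $Cn\,I_i^\eps\,m_{n-1}$ directly from \cite{Iftimie99Sideris}; the only new computation is the exterior-field part $b_{n,i}^\eps$, where a Taylor expansion plus Hypothesis~\ref{hyp:strong_stabilité} (killing the $\sum_{j\ne i}\gamma_jJ_K(z_i^*-z_j^*)$ term) yields $F_i^\eps(x,t)-\int F_i^\eps(y,t)\omega_i^\eps\,\dd y=\mathcal{O}(\eps^{2\beta})$ and hence $|b_{n,i}^\eps|\le Cn\,\eps^{5\beta}m_{n-1}\le Cn\,\eps^2 m_{n-1}$ once one assumes $\beta>2/5$. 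The recursion then closes exactly as in \cite[Lemma~4.4]{Donati_Iftimie_2020}.

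Your ``forcing term'' step is where the sketch actually breaks: bounding the annular mass crudely by $M\eps^{-\eta}r^2$ gives a quantity of order $\eps^{1-\eta}$ (since $r\sim\eps^{1/2}$ on the threshold), which is \emph{large} in $\eps$ and cannot be absorbed by choosing $p\sim k$ or by the hypothesis on $r$; a single closed Grönwall on your $m_p$ therefore does not produce a small tail bound. The moment-hierarchy route sidesteps this entirely---there is no separate forcing, only the lower-order moment $m_{n-1}$, and iterating down to $m_0=\gamma_i$ is precisely what generates the threshold $r^4\ge K_4\eps^2(1+kt\ln(2+t))$. If you wanted to make the cutoff-mass approach work you would need the Marchioro-type dyadic iteration over a sequence of radii, not a one-shot differential inequality.
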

We delay the proof to Section~\ref{sec:4n}.

Using Lemmas~\ref{lem:lip_amélioré}, \ref{lem:moments}, \ref{lem:vitesse_radiale} and \ref{lem:4n}, the end of the proof is nearly identical to \cite[Section 4.4]{Donati_Iftimie_2020}, with the only difference that we perform the argument on $$R^\eps(t) = \max_{1 \le i \le N} R_i^\eps(t)$$
to take into account that we have several blobs, which isn't the case in \cite{Donati_Iftimie_2020}. This concludes the proof of Theorem~\ref{theo:confinement} up to the proof of the intermediate lemmas.

\subsection{Preliminary tools}

As a consequence of the fact that $\omega_i^\eps$ is constant along the flow lines (which are well defined) of a divergence-free vector field $u_i^\eps+F_i^\eps$, see equation~\eqref{eq:pb_u_i}, we have the following derivation formula.
\begin{proposition}\label{prop:derivation}
    For any $\eps > 0$, for any $T>0$, for any $i \in \{1,\ldots,N\}$ and for any $\alpha \in C^1(\R^2\times[0,T],\R)$, we have that
    \begin{equation*}
        \der{}{t} \int_{\R^2} \alpha(x,t) \omega_i^\eps(x,t) = \int_{\R^2} \nabla \alpha(x,t) \cdot \big(u_i^\eps(x,t) + F_i^\eps(x,t)\big)\omega_i^\eps(x,t)\dd x + \int_{\R^2} \partial_t \alpha(x,t) \omega_i^\eps(x,t)\dd x.
    \end{equation*}
\end{proposition}
As a direct consequence, we have the two following relations for the time derivative of the moments of $\omega_i^\eps$:
\begin{equation*}
    \der{}{t} b_i^\eps(t) = \frac{1}{\gamma_i} \int  F_i^\eps(x,t)\omega_i^\eps(x,t)\dd x
\end{equation*}
and
\begin{equation}\label{eq:deriv_I}
    \der{}{t} I_i^\eps(t) = \frac{2}{\gamma_i} \int (x-b_i^\eps(t)) \cdot \Big(F_i^\eps(x,t)-F_i^\eps(b_i^\eps(t),t)\Big)\omega_j^\eps(x,t)\dd x.
\end{equation}
We also recall the following estimate, using the Cauchy-Schwarz inequality, that we will use without further reference in the paper:
\begin{equation*}
    \int |x-b_i^\eps(t)| \frac{\omega_i^\eps(x,t)}{\gamma_i} \dd x \le \left(\int  |x-b_i^\eps(t)|^2 \frac{\omega_i^\eps(x,t)}{\gamma_i} \dd x \int \frac{\omega_i^\eps(x,t)}{\gamma_i} \dd x \right)^{1/2} = \sqrt{I_i^\eps(t)}.
\end{equation*}

\subsection{Proof of Lemma~\ref{lem:lip_amélioré}}\label{sec:lip} Let $i \in\{1,\ldots,N\}$, $t \le \min(\tau_{\eps,\beta},\eps^{-\alpha})$ and $x,y \in D(z_i^*(t),\eps^\beta)$.
Recalling the definition \eqref{def:F} of the $F_i$, we have that
    \begin{align*}
    F_i^\eps(x,t) - F_i^\eps(y,t) & = \sum_{j\neq i} \int_{\R^2} \big(K(x-z)-K(y-z)\big) \omega_j^\eps(z,t) \dd z.
    \end{align*}
    From Hypotheses~\ref{hyp:omega} and the definition of $\tau_{\eps,\beta}$, we know that for $\eps$ small enough, for every $z \in \supp \omega_j^\eps(\cdot,t)$, $|x-z| \ge d/2$ and $|y-z| \ge d/2$, for $d = \min_{i \neq j} |z_i^*-z_j^*|$. Recall that $J_K$ denotes the Jacobian matrix of $K$. We expand using Taylor's formula by noticing that $K$ and its derivatives are bounded on $\R^2 \setminus D(0,d/2)$ to get that
    \begin{equation*}
        K(x-z)-K(y-z) = J_{K}(z_i^*(t)-z) (x-y) + \mathcal{O}(|x-y|^2)
    \end{equation*}
    and by the same argument that
    \begin{equation*}
        \big|J_{K}(z_i^*(t)-z) (x-y) - J_{K}(z_i^*(t)-z_j^*(t)) (x-y) \big| \le C |z-z_j^*||x-y|.
    \end{equation*}
    By Hypothesis~\ref{hyp:strong_stabilité}, we have that
    \begin{equation*}
        \sum_{j \neq i} J_{K}(z_i^*(t)-z_j^*(t)) (x-y) \int \omega_j^\eps(z,t) \dd z =  \sum_{j \neq i} \gamma_j  J_{K}(z_i^*(t)-z_j^*(t)) (x-y)  = 0.
    \end{equation*}
    Moreover, since $|x-y|\le 2 \eps^\beta$, since $t \le \tau_{\eps,\beta}$, and since for every $z \in \supp \omega_j^\eps$ we have that $|z-z_j^*(t)| \le \eps^\beta$, then
    \begin{equation*}
        \big|F_i^\eps(x,t) - F_i^\eps(y,t)\big| \le C \eps^\beta |x-y|.
    \end{equation*}

\subsection{Proof of Lemma~\ref{lem:moments}}\label{sec:proof_control}

    Equipped with Lemma~\ref{lem:lip_amélioré}, the proof of the estimate of $I_i^\eps $ is identical to the one of \cite[Lemma 4.3]{Donati_Iftimie_2020}, which we briefly recall. From relation~\eqref{eq:deriv_I} and Lemma~\ref{lem:lip_amélioré}, we have that
    \begin{align*}
    \left|\der{}{t} I_i^\eps(t)\right| & = 2\left|\int (x-b_i^\eps(t))  \cdot (F_i(x)-F_i(B_i(t)) \frac{\omega_i^\eps(x,t)}{\gamma_i} \dd x\right| \\
    & \le C \eps^\beta \int |x-b_i^\eps(t)|^2 \frac{\omega_i^\eps(x,t)}{\gamma_i} \dd x \\
    & \le C \eps^\beta I_i(t),
\end{align*}
from which we deduce by Gronwall's Lemma and Hypothesis~\ref{hyp:omega} that for $\eps$ small enough and for every $t \le \eps^{-\alpha} \le \eps^{-\beta}$, 
\begin{equation*}
    I(t) \le C I(0) \le C \eps^2.
\end{equation*}

We now estimate $|B^\eps - Z^*|$. We first compute that
\begin{align*}
    \der{}{t} b_i^\eps(t) - F_i^0(B^\eps(t)) & = \frac{1}{\gamma_i}\int F_i^\eps(x,t)\omega_i^\eps(x,t)\dd x - \sum_{j \neq i} \gamma_j K\big(b_i^\eps(t)-b_j^\eps(t)\big) \\
    & = \frac{1}{\gamma_i} \iint \sum_{j \neq i}\Big(K(x-y)-K\big(b_i^\eps(t)-b_j^\eps(t)\big)\Big)\omega_j^\eps(y,t)\omega_i^\eps(x,t)\dd y \dd x.
\end{align*}
Then, simply using that
\begin{equation*}
    \Big|K(x-y)-K\big(b_i^\eps(t)-b_j^\eps(t)\big)\Big| \le C |x-b_i^\eps(t)| + C|y-b_j^\eps(t)|
\end{equation*}
we conclude that
\begin{equation*}
    \left|\der{}{t} b_i^\eps(t) - F_i^0(B^\eps(t))\right| \le C \sqrt{I(t)} \le C \eps.
\end{equation*}
Writing this in a frame moving at velocity $\nu$, we conclude that $B^\eps$ satisfies the problem~\eqref{pb:zeps} on the time interval $[0,\min(\tau_{\eps,\beta},\eps^{-\alpha})]$. We thus use Proposition~\ref{prop:stabilité} to get that for every $t\le \min(\tau_{\eps,\beta},\eps^{-\alpha})$,
\begin{equation*}
    |B^\eps(t)-Z^*(t)|\le C \eps^{2\beta-2\alpha}.
\end{equation*}

\subsection{Proof of Lemma~\ref{lem:4n}}\label{sec:4n}

This Lemma is very reminiscent of \cite[Lemma 4.4]{Donati_Iftimie_2020} and the proof is very similar. We introduce the moments $m_{n,i}^\eps$ of order $4n$ of each blob $\omega_i^\eps$ given by
\begin{equation*}
    m_{n,i}^\eps(t) = \int |x-b_i^\eps(t)|^{4n} \omega_i^\eps(x,t)\dd x.
\end{equation*}
Computing the derivative in time using Proposition~\ref{prop:derivation} gives that
\begin{align*}
    \der{}{t} m_{n,i}^\eps(t) & =  \int \big( x - b_i^\eps(t) \big)\big|x-b_i^\eps(t)\big|^{4n-2} \cdot \big(u_i^\eps(x,t) + F_i^\eps(x,t) \big)\omega_i^\eps(x,t)\dd x \\
    & \hspace{1cm} -\int \der{}{t} b_i^\eps(t) \cdot \big( x - b_i^\eps(t) \big) \big|x-b_i^\eps(t)\big|^{4n-2}\omega_i^\eps(x,t)\dd x \\ 
    & = 4n \int \big( x - b_i^\eps(t) \big)\big|x-b_i^\eps(t)\big|^{4n-2} \cdot u_i^\eps(x,t) \omega_i^\eps(x,t)\dd x\\
    & \hspace{1cm} + 4n \int \big( x - b_i^\eps(t) \big)\big|x-b_i^\eps(t)\big|^{4n-2} \cdot F_i^\eps(x,t)\omega_i^\eps(x,t)\dd x \\
    & \hspace{2cm}  - 4n \int F_i^\eps(y,t) \omega_i^\eps(y,t) \dd y \cdot \int \big( x - b_i^\eps(t) \big) \big|x-b_i^\eps(t)\big|^{4n-2}\omega_i^\eps(x,t)  \dd x  \\
    & := a_{n,i}^\eps(t) + b_{n,i}^\eps(t)
\end{align*}
with
\begin{equation*}
    \begin{split}
        a_{n,i}^\eps(t) & = 4n \int \big( x - b_i^\eps(t) \big)\big|x-b_i^\eps(t)\big|^{4n-2} \cdot u_i^\eps(x,t) \omega_i^\eps(x,t)\dd x \\
        b_{n,i}^\eps(t) & = 4n \int \big( x - b_i^\eps(t) \big)\big|x-b_i^\eps(t)\big|^{4n-2} \cdot \left(F_i^\eps(x,t)-\int F_i^\eps(y,t) \omega_i^\eps(y,t) \dd y\right)\omega_i^\eps(x,t)\dd x.
    \end{split}
\end{equation*}
Replacing $u_i^\eps$ by its expression given in relations~\eqref{eq:pb_u_i}, we have that
\begin{align*}
    a_{n,i}^\eps(t) & = 4n\iint \big( x - b_i^\eps(t) \big)\big|x-b_i^\eps(t)\big|^{4n-2} \cdot K(x-y) \omega_i^\eps(x,t) \omega_i^\eps(y,t) \dd x \, \dd y.
\end{align*}
From \cite[pages 1726-1729]{Iftimie99Sideris} (see also \cite{Donati_Iftimie_2020}) we infer that the term $a_{n,i}^\eps(t)$ satisfies for every $i \in \{1,\ldots,N\}$ and for every $t \ge 0$ that
\begin{equation*}
    |a_{n,i}^\eps(t)| \le C n I_i^\eps(t) m_{n-1,i}^\eps(t).
\end{equation*}
Then, we compute that
\begin{align*}
    F_i^\eps(x,t)-\int F_i^\eps(y,t) \omega_i^\eps(y,t) \dd y & = \sum_{j \neq i}\iint \big[ K(x-z)-K(y-z)\big] \omega_j^\eps(z,t) \omega_i^\eps(y,t) \dd z \dd y \\
    & = \sum_{j \neq i}\iint \big[ J_K(x-z)(x-y) + \mathcal{O}(|x-y|^2)\big] \omega_j^\eps(z,t) \omega_i^\eps(y,t) \dd z \dd y \\ 
    & = \sum_{j \neq i} \int  \gamma_i J_K(x-z)(x-b_i^\eps(t)) \omega_j^\eps(z,t) \dd z  + \mathcal{O}(\eps^{2\beta}).
\end{align*}
Since
\begin{align*}
    J_K(x-z) & = J_K(z_i^*(t)-z_j^*(t)) + J_K(x-z_j^*(t)) - J_K(z_i^*(t)-z_j^*(t)) +  J_K(x-z) - J_K(x-z_j^*(t)) \\
    & = J_K(z_i^*(t)-z_j^*(t)) + \mathcal{O}(\eps^\beta),
\end{align*}
and since $|x-b_i^\eps(t)| = \mathcal{O}(\eps^\beta)$, we have that
\begin{align*}
    F_i^\eps(x,t)-\int F_i^\eps(y,t) \omega_i^\eps(y,t) \dd y & = \gamma_i \sum_{j \neq i} \int   J_K(z_i^*(t)-z_j^*(t))(x-b_i^\eps(t))\omega_j^\eps(z,t) \dd z + \mathcal{O}(\eps^{2\beta}) \\
    & = \gamma_i \sum_{j \neq i} \gamma_j J_K(z_i^*(t)-z_j^*(t))(x-b_i^\eps(t)) + \mathcal{O}(\eps^{2\beta}) \\
    & = \mathcal{O}(\eps^{2\beta}),
\end{align*}
where we used Hypothesis~\ref{hyp:strong_stabilité}. Therefore, we have that
\begin{equation*}
    |b_{n,i}^\eps(t)| \le Cn \eps^{5\beta} \int |x-b_i^\eps(t)|^{4n-4} \omega_i^\eps(x,t)\dd x = C n \eps^{5\beta} m_{n-1,i}^\eps(t).
\end{equation*}
Using Lemma~\ref{lem:moments} we obtain in the end that
\begin{equation*}
    \left|\der{}{t} m_{n,i}^\eps(t) \right| \le C n^2 (\eps^2 + \eps^{5\beta})m_{n-1,i}^\eps(t) \le C n^2 \eps^2 m_{n-1,i}^\eps(t)
\end{equation*}
assuming that $\beta > 2/5$, recalling that if Theorem~\ref{theo:confinement} is satisfied for some $\beta$, then it is satisfied for any $\beta' < \beta$. 

With this estimate in hand, the rest of the proof is identical to \cite[Lemma 4.4]{Donati_Iftimie_2020}.

\section{Choosing the intensity of the central vortex}\label{sec:construction}

This section is devoted to the proof of Theorem~\ref{theo:construction}.

Let us recall that we consider the following configuration of point-vortices: let $\gamma_1 = \ldots = \gamma_{N-1} = 1$ and $\gamma_N \in \R \setminus\{0\}$ to be chosen later, let $\zeta = e^{\ic \frac{2\pi}{N-1}}$ and let
\begin{equation*}
    \begin{cases}
        z_k^* = \zeta^k, \quad \forall k \in \{1,\ldots, N-1\},\\
        z_N^* = 0.
    \end{cases}
\end{equation*}
We recall that this configuration gives rise to a vortex crystal, see \cite{AREF_2003_Vortex_Crystals} for more details. For this configuration, recalling relation~\eqref{def:nu}, we compute that
\begin{equation*}
    \nu = \frac{(N-1)(N-2) + 2(N-1)\gamma_N}{2(N-1)} = \frac{N-2}{2}+ \gamma_N.
\end{equation*}

Our strategy is the following. We first prove the existence of each $N \ge 4$ of $\gamma_N$ such that the configuration satisfies Hypothesis~\ref{hyp:strong_stabilité}, then proves that $\gamma_N$ satisfies~\eqref{eq:range_gammaN} which by Theorem~\ref{theo:CS} implies the linear stability of the configuration in the sense of Hypothesis~\ref{hyp:lyap} -- which in addition requires to check that $\sum_{i\neq j}\gamma_i\gamma_j \neq 0$, which is equivalent to $\gamma_N \neq -\frac{N-2}{2}$.

\subsection{Reduction by symmetries}

Let
\begin{equation*}
    \mathcal{A}(i,t) = \sum_{j\neq i} \gamma_j J_{K}(z_i^*(t)-z_j^*(t)),
\end{equation*}
and let us recall that Hypothesis~\ref{hyp:strong_stabilité} is satisfied if and only if for every $t\ge 0$ and every $i\in \{1,\ldots,N\}$, we have that $\mathcal{A}(i,t)=0$. However, due to symmetries, we can reduce the computations to a single case.
\begin{lemma}\label{lem:reduc}
    Hypothesis~\ref{hyp:strong_stabilité} is satisfied if and only if
    \begin{equation*}
        \mathcal{A}(N-1,0) = 0.
    \end{equation*}
\end{lemma}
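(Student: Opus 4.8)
The plan is to exploit the rotational symmetries of the configuration to collapse the infinitely many conditions $\mathcal{A}(i,t)=0$ (for all $i\in\{1,\dots,N\}$ and all $t\ge 0$) down to the single condition $\mathcal{A}(N-1,0)=0$. One implication is immediate, since $\mathcal{A}(N-1,0)=0$ is literally one of the conditions appearing in Hypothesis~\ref{hyp:strong_stabilité}; all the content lies in the converse. The key computational input, which I would establish first, is the equivariance of the Biot--Savart kernel under rotations: writing $R_\theta$ for the rotation matrix of angle $\theta$, one reads off from $K(x)=x^\perp/(2\pi|x|^2)$ that $K(R_\theta x)=R_\theta K(x)$, because rotations commute with $x\mapsto x^\perp$ and preserve $|x|$. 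Differentiating this identity in $x$ yields the conjugation rule $J_K(R_\theta x)=R_\theta J_K(x)R_\theta^{-1}$, which drives the whole argument.

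With this in hand I would first eliminate the time dependence. Since $Z^*$ is a relative equilibrium, $z_i^*(t)-z_j^*(t)=R_{\nu t}(z_i^*-z_j^*)$, so the conjugation rule gives $\mathcal{A}(i,t)=R_{\nu t}\,\mathcal{A}(i,0)\,R_{\nu t}^{-1}$; hence $\mathcal{A}(i,t)=0$ for all $t$ if and only if $\mathcal{A}(i,0)=0$, and it suffices to work at $t=0$. Next I would use the $(N-1)$-fold symmetry of the frozen configuration. The rotation $\sigma:=R_{2\pi/(N-1)}$ satisfies $\sigma z_k^*=z_{k+1}^*$ (indices mod $N-1$) and $\sigma z_N^*=z_N^*=0$, and it preserves the intensities because $\gamma_1=\dots=\gamma_{N-1}$. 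Reindexing the sum defining $\mathcal{A}(k+1,0)$ through this permutation and applying the conjugation rule produces $\mathcal{A}(k+1,0)=\sigma\,\mathcal{A}(k,0)\,\sigma^{-1}$ for the exterior indices. Iterating, all the exterior conditions $\mathcal{A}(1,0)=\dots=\mathcal{A}(N-1,0)=0$ become equivalent to the single one $\mathcal{A}(N-1,0)=0$.

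What remains is the one piece that does \emph{not} reduce to $\mathcal{A}(N-1,0)$, namely the central vortex: I must show that $\mathcal{A}(N,0)=\sum_{k=1}^{N-1}J_K(-z_k^*)$ vanishes automatically, independently of $\gamma_N$, and this is the main (if mild) obstacle. Using $-z_k^*=\sigma^k(-z_{N-1}^*)$ together with the conjugation rule rewrites it as $\sum_{k}\sigma^k J_K(-z_{N-1}^*)\sigma^{-k}$. The point is that $J_K$ is symmetric and traceless, so conjugation by $\sigma^k$ rotates it by the \emph{double} angle $4\pi k/(N-1)$; the sum is therefore governed by $\sum_{k=0}^{N-2}(\zeta^2)^k$. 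Since $N\ge 4$ forces $N-1\ge 3$, we have $\zeta^2\neq 1$ while $(\zeta^2)^{N-1}=1$, so this geometric sum is zero — equivalently, the second harmonic is annihilated by any rotational symmetry of order at least $3$. This yields $\mathcal{A}(N,0)=0$ and closes the equivalence. (In the degenerate case $N=6$, where $\gamma_6=0$ and the central vortex is absent, the same reduction applies to the five exterior vortices alone, with no central condition to check.)
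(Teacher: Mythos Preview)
Your proof is correct and follows the same rotational-symmetry strategy as the paper: reduce to $t=0$ via the vortex-crystal rotation, collapse the exterior indices to a single one via the $(N-1)$-fold symmetry, and show the central condition $\mathcal{A}(N,0)=0$ holds automatically. Your conjugation rule $J_K(R_\theta x)=R_\theta J_K(x)R_\theta^{-1}$ and the double-angle/second-harmonic argument for the central vortex are in fact a bit more careful than the paper's complex-number shorthand, but the structure is identical.
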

\begin{proof}
To avoid any confusion, we briefly go back to the matrix notation. The map $K$ satisfies for every $x \neq 0$ and every $\theta \in \R$ that
\begin{equation*}
    K(R_\theta x) = R_\theta K(x),
\end{equation*}
and differentiating this relation using the chain rule gives that
\begin{equation*}
    J_K(R_\theta x) R_\theta = R_\theta J_K(x).
\end{equation*}
Since for every $t\ge 0$, $z_i^*(t) = e^{\ic \nu t} z_i^*(0)$, this means that for every $i\in \{1,\ldots,N\}$ and every $t\ge0$,
\begin{equation*}
    \mathcal{A}(i,t) = R_{\nu t}\mathcal{A}(i,0) R_{-\nu t}
\end{equation*}
and thus,
\begin{equation*}
    \mathcal{A}(i,t) = 0 \Longleftrightarrow \mathcal{A}(i,0) = 0.
\end{equation*}
We now compute that for $i=N$, we have that
\begin{equation*}
    \mathcal{A}(N,0) = \sum_{j=1}^{N-1} J_K(-\zeta^j) = \sum_{j=0}^{N-2}  J_K(- R_{\frac{2\pi}{N-1}} \zeta^j) = R_{\frac{2\pi}{N-1}}\sum_{j=0}^{N-2} J_K(-\zeta^j) R_{-\frac{2\pi}{N-1}}=  R_{\frac{2\pi}{N-1}} \mathcal{A}(N,0)R_{-\frac{2\pi}{N-1}}.
\end{equation*}
Therefore, we always have that $\mathcal{A}(N,0) = 0$.
Finally, we have for any $i \in \{1,\ldots,N-1\}$ that
\begin{align*}
    \mathcal{A}(i,0) & = \sum_{\substack{j=1 \\ j \neq i}}^{N-1} J_K(\zeta^i-\zeta^j) + \gamma_N J_K(\zeta^i) \\
    & = R_{\frac{2\pi}{N-1}}\sum_{\substack{j=1 \\ j \neq i}}^{N-1} J_K(1-\zeta^{j-i}) R_{-\frac{2\pi}{N-1}} + \gamma_N R_{\frac{2\pi}{N-1}}J_K(1)R_{-\frac{2\pi}{N-1}}\\
    & = R_{\frac{2\pi}{N-1}}\left(\sum_{\substack{j=1}}^{N-2} J_K(1-\zeta^{j}) + \gamma_N J_K(1)\right)R_{-\frac{2\pi}{N-1}}\\
    & = R_{\frac{2\pi}{N-1}} \mathcal{A}(N-1,0)R_{-\frac{2\pi}{N-1}},
\end{align*}
which proves that for every $i\in\{1,\ldots,N-1\}$,
\begin{equation*}
    \mathcal{A}(i,0) = 0 \Longleftrightarrow \mathcal{A}(N-1,0) = 0
\end{equation*}
which concludes the proof.
\end{proof}

\subsection{Strain in vortex crystals}

We start with a general formula. We compute for $x$ close to $y \in \R^2\setminus\{0\}$,
\begin{align*}
    \frac{x^\perp}{|x|^{2}} - \frac{y^\perp}{|y|^{2}} & = \frac{1}{|y|^{2}}\left(x^\perp\frac{|y|^{2}}{|x-y+y|^{2}} - y^\perp \right) \\
    & = \frac{1}{|y|^{2}}\left(x^\perp\frac{|y|^{2}}{|x|^2 + 2 x\cdot y + |y|^2} - y^\perp \right) \\
    & = \frac{1}{|y|^{2}}\left(x^\perp\frac{1}{ \frac{|x-y|^2}{|y|^2} + 2 (x-y)\cdot \frac{y}{|y|^2} + 1} - y^\perp \right) \\
    & =  \frac{1}{|y|^{2}}\left(x^\perp\Big( 1-2 (x-y)\cdot \frac{y}{|y|^2}\Big) - y^\perp \right) + o(x-y) \\
    & =  \frac{1}{|y|^{2}}\left((x-y+y)^\perp\Big( 1-2 (x-y)\cdot \frac{y}{|y|^2}\Big) - y^\perp \right) + o(x-y) \\
    & =  \frac{1}{|y|^{2}}\left((x-y)^\perp - y^\perp2 (x-y)\cdot \frac{y}{|y|^2}\Big)\right) + o(x-y). \\
\end{align*}
Changing to the variable $x' = x-y$ without changing the name,
\begin{align*}
    \frac{(x+y)^\perp}{|x+y|^{2}} - \frac{y^\perp}{|y|^{2}} & = \frac{1}{|y|^{2}}\left(x^\perp - y^\perp 2 x\cdot \frac{y}{|y|^2}\Big)\right) + o(x).
\end{align*}
Therefore,
\begin{equation}\label{eq:formule_JK}
    J_K(y) = \frac{1}{2\pi|y|^4}\begin{pmatrix} 2y_1 y_2 &  y_2^2-y_1^2 \\ y_2^2-y_1^2   & -2y_1y_2  \end{pmatrix},
\end{equation}
and in particular, by denoting $\bar{y} = (y_1,-y_2)$ we have that
\begin{equation}\label{eq:formuleJK2}
    J_K(y) + J_K(\bar{y}) = \frac{(y_2^2-y_1^2)}{\pi|y|^4}\begin{pmatrix} 0 &  1 \\ 1   & 0  \end{pmatrix}.
\end{equation}
We are now ready to compute $\mathcal{A}(N-1,0)$. We group the terms by conjugates, except for $j = (N-1)/2$ when $N-1$ is even and obtain that
\begin{equation}\label{eq:JF1}
    \mathcal{A}(N-1,0) = \begin{cases} \displaystyle
        \gamma_N J_K(1) + \sum_{j=1}^{(N-2)/2} \big(J_K(1-\zeta^j) + J_K(1-\zeta^{N-j}) \big) & \text{ if $N$ is even}\\
        \displaystyle
        \gamma_N J_K(1) + \sum_{j=1}^{(N-3)/2} \big(J_K(1-\zeta^j) + J_K(1-\zeta^{N-j}) \big) + J_K(2) & \text{ if $N$ is odd.}
    \end{cases}
\end{equation}
From relation~\eqref{eq:formule_JK} we compute that
\begin{equation*}
    J_K(1) = -\frac{1}{2\pi}\begin{pmatrix} 0 &  1 \\ 1   & 0  \end{pmatrix}
\end{equation*}
and
\begin{equation*}
    J_K(2) = -\frac{1}{8\pi}\begin{pmatrix} 0 &  1 \\ 1   & 0  \end{pmatrix}
\end{equation*}
and thus using relation~\eqref{eq:formuleJK2}, we can write~\eqref{eq:JF1} under the more compact formulation
\begin{equation}\label{eq:JK3}
    \mathcal{A}(N-1,0) = \frac{1}{2\pi}\left(-\gamma_N + c_N - \frac{\sigma(N)}{4}\right)\begin{pmatrix} 0 &  1 \\ 1   & 0  \end{pmatrix}
\end{equation}
where 
\begin{equation*}
    \sigma(N) = \begin{cases}
        0 & \text{ if $N$ is even} \\
        1 & \text{ if $N$ is odd}
    \end{cases}
\end{equation*}
and
\begin{equation*}
    c_N = 2\sum_{j=1}^{\lfloor (N-2)/2\rfloor} \frac{\Im(\zeta^j)^2 - \big(1-\Re(\zeta^j)\big)^2}{|1-\zeta^j|^4}.
\end{equation*}
From relation~\eqref{eq:JK3} together with Lemma~\ref{lem:reduc}, we deduce that for every $N \ge 4$, there exists a unique $\gamma_N \in \R$, given by
\begin{equation*}
    \gamma_N = c_N - \frac{\sigma(N)}{4}, 
\end{equation*}
such that the configuration satisfies Hypothesis~\ref{hyp:strong_stabilité}. In the next subsection we will show that this last relation matches the definition of $\gamma_N$ given at relation~\eqref{def:gammaN}. Beware that at this step, it is possible that $\gamma_N = 0$ for some values of $N$ which would imply that the configurations have no central vortex.

\subsection{The value of $\gamma_N$}
Let $\gamma_N$ be given by the relation
\begin{equation*}
    \gamma_N = c_N - \frac{\sigma(N)}{4}.
\end{equation*}
It happens to exist a much simpler formula for $\gamma_N$. Let us first notice that
\begin{align*}
    c_N & = 2\sum_{j=1}^{\lfloor (N-2)/2\rfloor} \frac{\Im(\zeta^j)^2 - \big(1-\Re(\zeta^j)\big)^2}{|1-\zeta^j|^4} \\
    & = -2\sum_{j=1}^{\lfloor (N-2)/2\rfloor} \Re\left( \frac{1}{(1-\zeta^j)^2}\right) \\
    & = - \sum_{j \in \{1,\ldots,N-2\} \setminus \{(N-1)/2\}}\Re\left( \frac{1}{(1-\zeta^j)^2} \right).
\end{align*}
When $N$ is odd,
\begin{equation*}
    -\frac{\sigma(N)}{4} = -\frac{1}{4} = -\Re \left(\frac{1}{(1-\zeta^{(N-1)/2})^2}\right)
\end{equation*}
which in conclusion leads to
\begin{equation*}
    \gamma_N = - \Re\left(\sum_{j=1}^{N-2} \frac{1}{(1-\zeta^j)^2}\right).
\end{equation*}
Introducing the polynomial $p(z) = \frac{1-z^{N-1}}{1-z}=1+z+\ldots+z^{N-2}$, we have from one hand that
\begin{equation*}
    \sum_{j=1}^{N-2} \frac{1}{1-\zeta^j} = \frac{p'}{p} (1),
\end{equation*}
but more importantly in our situation, also that
\begin{equation*}
    \sum_{j=1}^{N-2} \frac{1}{(1-\zeta^j)^2} = -\left(\frac{p'}{p}\right)' (1).
\end{equation*}
We then compute that
\begin{align*}
    \left(\frac{p'}{p}\right)' (1) & = \frac{p''(z)}{p(z)} - \frac{(p'(z))^2}{p^2(z)} \\
    & = \frac{\frac{1}{6}(N-1)(N-2)(2N-3) - \frac{1}{2}(N-1)(N-2)}{N-1}-  \frac{\frac{1}{4} (N-1)^2(N-2)^2}{(N-1)^2} \\
    & = \frac{(N-2)(N-6)}{12}.
\end{align*}
In conclusion, we proved that
\begin{equation*}
    \gamma_N = -\Re\left(-\frac{(N-2)(N-6)}{12}\right) = \frac{(N-2)(N-6)}{12}.
\end{equation*}
From this expression, we see immediately that $\gamma_N = 0 \Longleftrightarrow N=6$, since we assumed that $N >2$, and that $\gamma_N > 0 \Longleftarrow N > 6$. Moreover, $\gamma_N \neq - \frac{N-2}{2}$. The first values of $\gamma_N$ are
\begin{equation*}
    \quad \gamma_4 = -\frac{1}{3}, \quad \gamma_5 = -\frac{1}{4}, \quad \gamma_6 = 0, \quad \gamma_7 = \frac{5}{12}, \quad \gamma_8 = 1,
\end{equation*}
The fact that $\gamma_6 =0$ means that for $N=6$, we produce a configuration that satisfies~\eqref{hyp:strong_stabilité}, but this configuration is a regular pentagon with no central vortex (it has only 5 vortices). Moreover, $\gamma_6$ satisfies relations~\eqref{eq:range_gammaN} and since in that case $\sum_{i\neq j} \gamma_i \gamma_j > 0$, this configuration without central vortex also satisfies Hypothesis~\ref{hyp:lyap}. So the conclusion of Theorem~\ref{theo:construction} remains true in the case $N=6$ by considering the constructed configuration as a $5$ point-vortices configuration.

\vspace{0.3cm}

\paragraph{Acknowledgments.}
\text{ }
\medskip

The author wishes to thank Thierry Gallay for numerous useful discussions, Michele Dolce for suggesting the method to compute the simplified expression of $\gamma_N$, and the anonymous referee for the very careful review, the comments and corrections.

This work was supported by the BOURGEONS project, grant ANR-23-CE40-0014-01 of the French National Research Agency (ANR).

\bibliographystyle{plain}

\end{document}